\theoremstyle{plain}
\newtheorem{theorem}[equation]{Theorem}
\newtheorem{lemma}[equation]{Lemma}
\theoremstyle{definition}
\renewcommand{\vec}[1]{{\bf #1}}
\def\Z{{\mathbb{Z}}}
\def\T{{\mathbb{T}}}
\def\R{{\mathbb{R}}}
\newcommand{\C}{\mathbb{C}}
\newcommand{\h}{\mathbb{H}}
\def\C{{\mathbb{C}}}
\newcommand{\leg}[2]{\left(\frac{#1}{#2}\right)}
\newcommand{\e}[1]{e\left(#1\right)}
\newcommand{\ep}[1]{e\left(\frac{#1}{p}\right)}
\newcommand{\epsmall}[1]{e\Big(\frac{#1}{p}\Big)}
\newcommand{\eqsmall}[1]{e\Big(\frac{#1}{q}\Big)}
\renewcommand{\Re}{\operatorname{Re}}
\renewcommand{\Im}{\operatorname{Im}}
\title[Generalized Kloosterman sums]{Visual properties of\\generalized Kloosterman sums}
\author[P.~Burkhardt]{Paula Burkhardt}
\author[A.Z.-Y.~Chan]{Alice Zhuo-Yu Chan}
    \address{   Department of Mathematics\\
            University of California, San Diego\\
            La Jolla, California\\
            92093 \\ USA}
    \email{azchan@ucsd.edu}
\author[G.~Currier]{Gabriel Currier}
\author[S.R.~Garcia]{Stephan Ramon Garcia}
    \address{   Department of Mathematics\\
            Pomona College\\ 610 N. College Ave\\
            Claremont, California\\
            91711 \\ USA}
    \email{Stephan.Garcia@pomona.edu}
    \urladdr{\url{http://pages.pomona.edu/~sg064747}}
\author[F.~Luca]{Florian Luca}    
\address{School of Mathematics\\University of the Witwatersrand\\Private Bag 3, Wits 2050, Johannesburg, South Africa}
\email{Florian.Luca@wits.ac.za}
\author[H.~Suh]{Hong Suh}
\keywords{Kloosterman sum, Gauss sum, Sali\'e sum, supercharacter, hypocycloid, uniform distribution, equidistribution, Lucas number,
Lucas prime.}    
\thanks{Partially supported by NSF Grant DMS-1001614 and a NSF graduate research fellowship.}
\begin{document}

\maketitle

\begin{abstract}
    For a positive integer $m$ and a subgroup $\Lambda$ of the unit group $(\Z/m\Z)^\times$,
    the corresponding \emph{generalized Kloosterman sum} is the function
    $K(a,b,m,\Lambda) = \sum_{u \in \Lambda}e(\frac{au + bu^{-1}}{m})$.
    Unlike classical Kloosterman sums, which are real valued, generalized Kloosterman sums 
    display a surprising array of visual features when their values 
    are plotted in the complex plane.  In a variety of instances,
    we identify the precise number-theoretic conditions that give rise to particular phenomena.
\end{abstract}

\section{Introduction}
    For a positive integer $m$ and a subgroup $\Lambda$ of the unit group $(\Z/m\Z)^\times$,
    the corresponding \emph{generalized Kloosterman sum} is the function
    \begin{equation}\label{eq:Kabmg}
        K(a,b,m,\Lambda) = \sum_{u \in \Lambda}e\left(\frac{au + bu^{-1}}{m}\right),
    \end{equation} 
    in which $e(x)=\exp (2\pi i x)$ and $u^{-1}$ denotes the multiplicative inverse of $u$ modulo $m$.
    The classical Kloosterman sum arises when $\Lambda = (\Z/m\Z)^{\times}$ \cite{iwaniec2004analytic}.
        
    Unlike their classical counterparts, which are real valued, generalized Kloosterman sums 
    display a surprising array of visual features when their values 
    are plotted in the complex plane; see Figure \ref{fig:various1}.  Our aim here is
    to initiate the investigation of these sums from a graphical perspective.  In a variety of instances,
    we identify the precise number-theoretic conditions that give rise to particular phenomena.

    \begin{figure}
    	\centering
    	\begin{subfigure}[b]{0.30\textwidth}
    		\includegraphics[width=\textwidth]{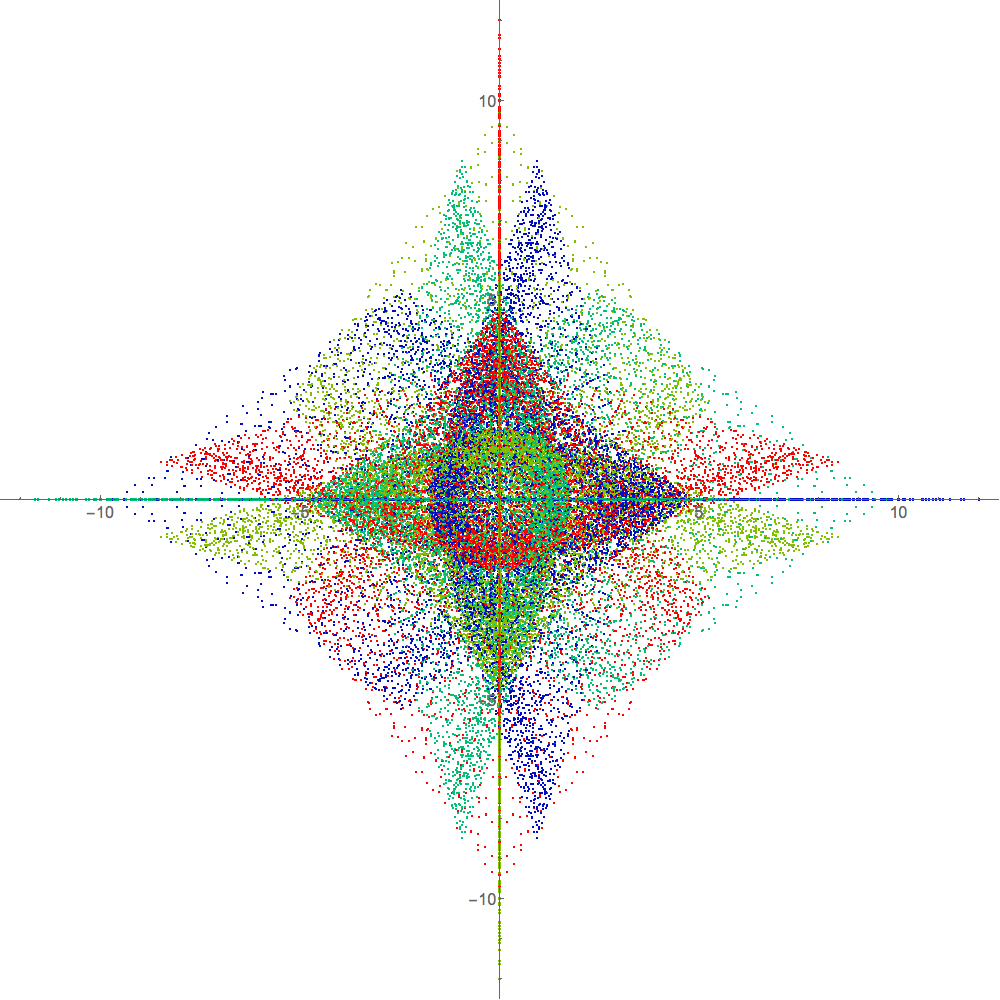}
    		\caption{$m=4820$, $\Lambda=\langle 1209 \rangle$}
    	\end{subfigure}
    	\quad
    	\begin{subfigure}[b]{0.30\textwidth}
    		\includegraphics[width=\textwidth]{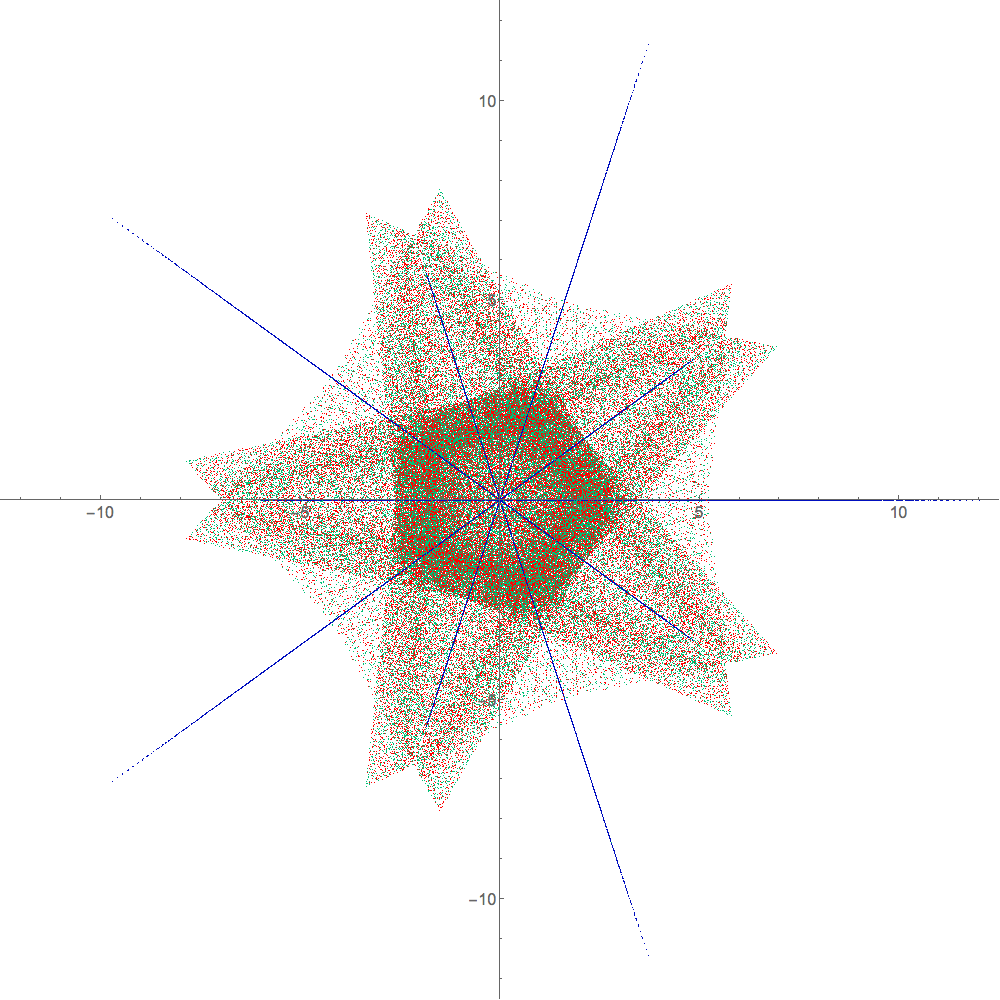}
    		\caption{$m=9015$, $\Lambda=\langle 596 \rangle$}
    	\end{subfigure}
    	\quad
    	\begin{subfigure}[b]{0.30\textwidth}
    		\includegraphics[width=\textwidth]{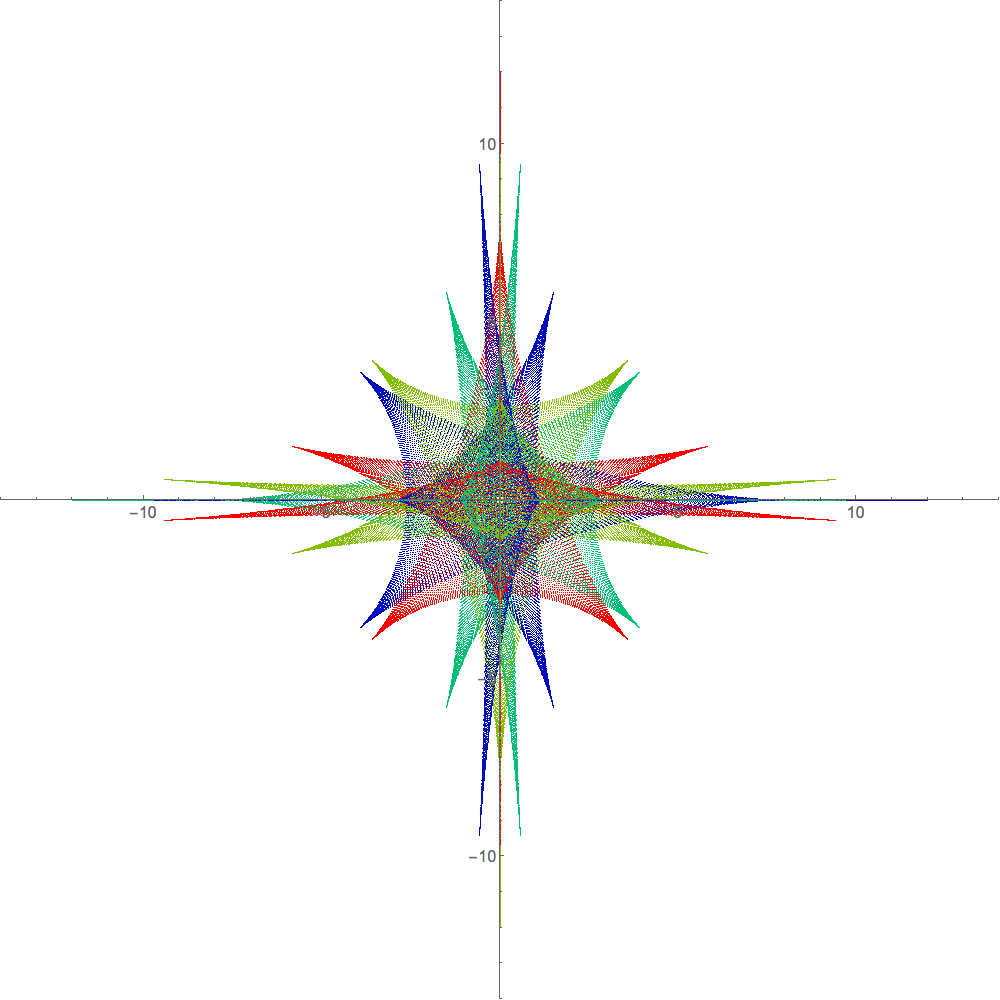}
    		\caption{$m=4820$, $\Lambda = \langle 257 \rangle$}
    	\end{subfigure}
    	\quad
    
    	\begin{subfigure}[b]{0.30\textwidth}
    		\includegraphics[width=\textwidth]{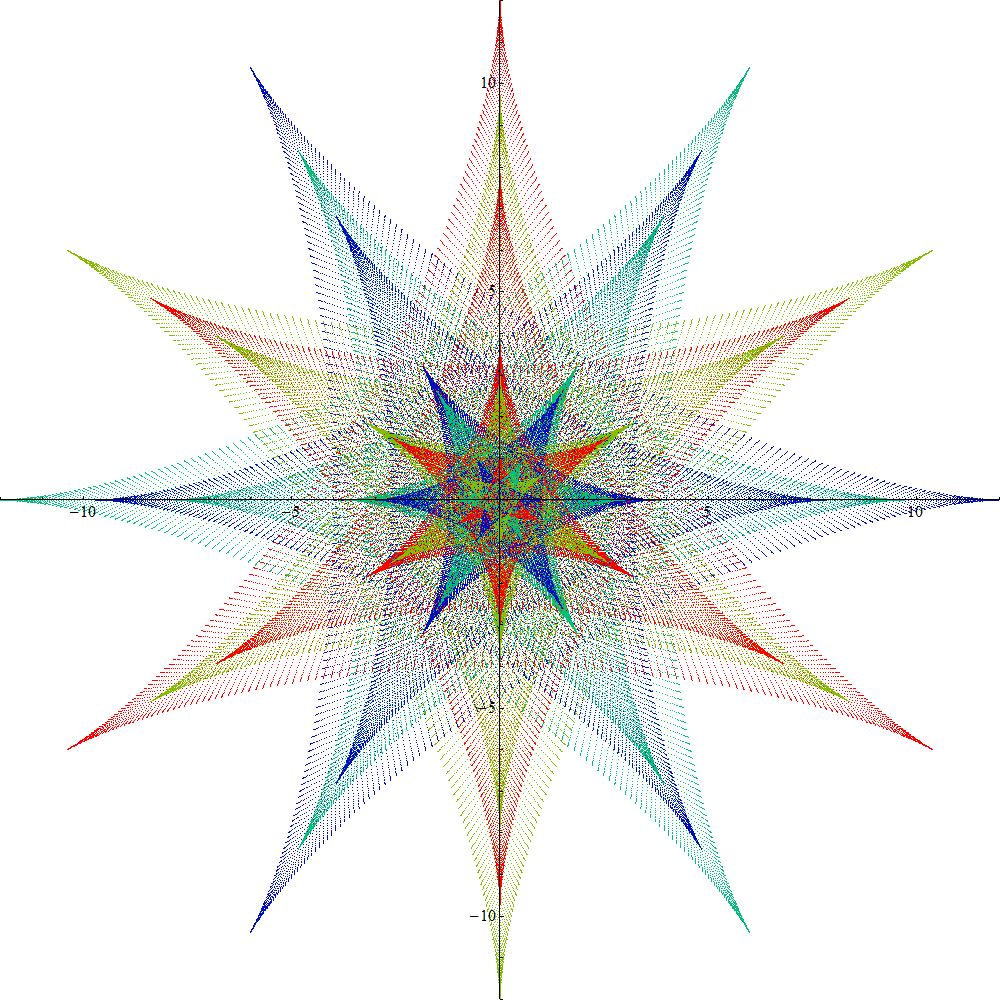}
    		\caption{$m=4820$, $\Lambda = \langle 497 \rangle$}
    	\end{subfigure}
    	\quad
    	\begin{subfigure}[b]{0.30\textwidth}
    		\includegraphics[width=\textwidth]{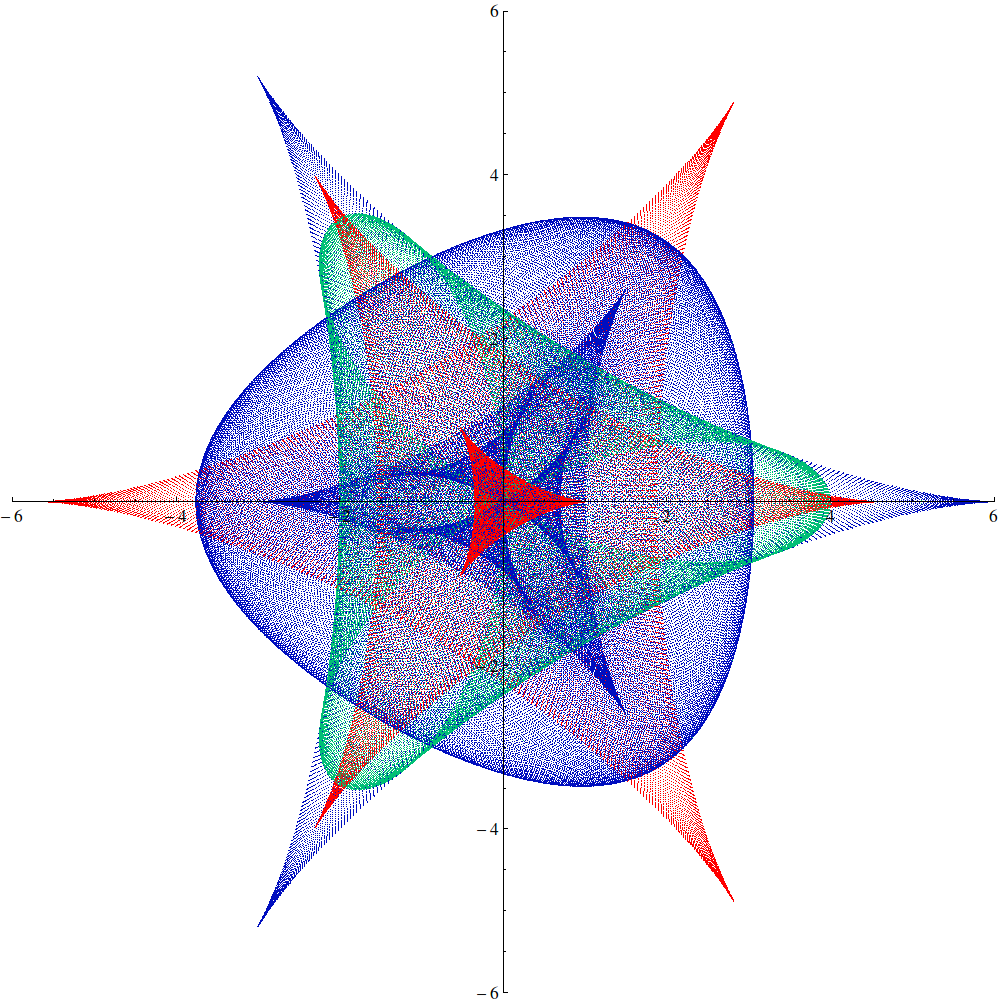}
    		\caption{$m=3087$, $\Lambda = \langle 1010 \rangle$}
    	\end{subfigure}
    	\quad
    	\begin{subfigure}[b]{0.30\textwidth}
    		\includegraphics[width=\textwidth]{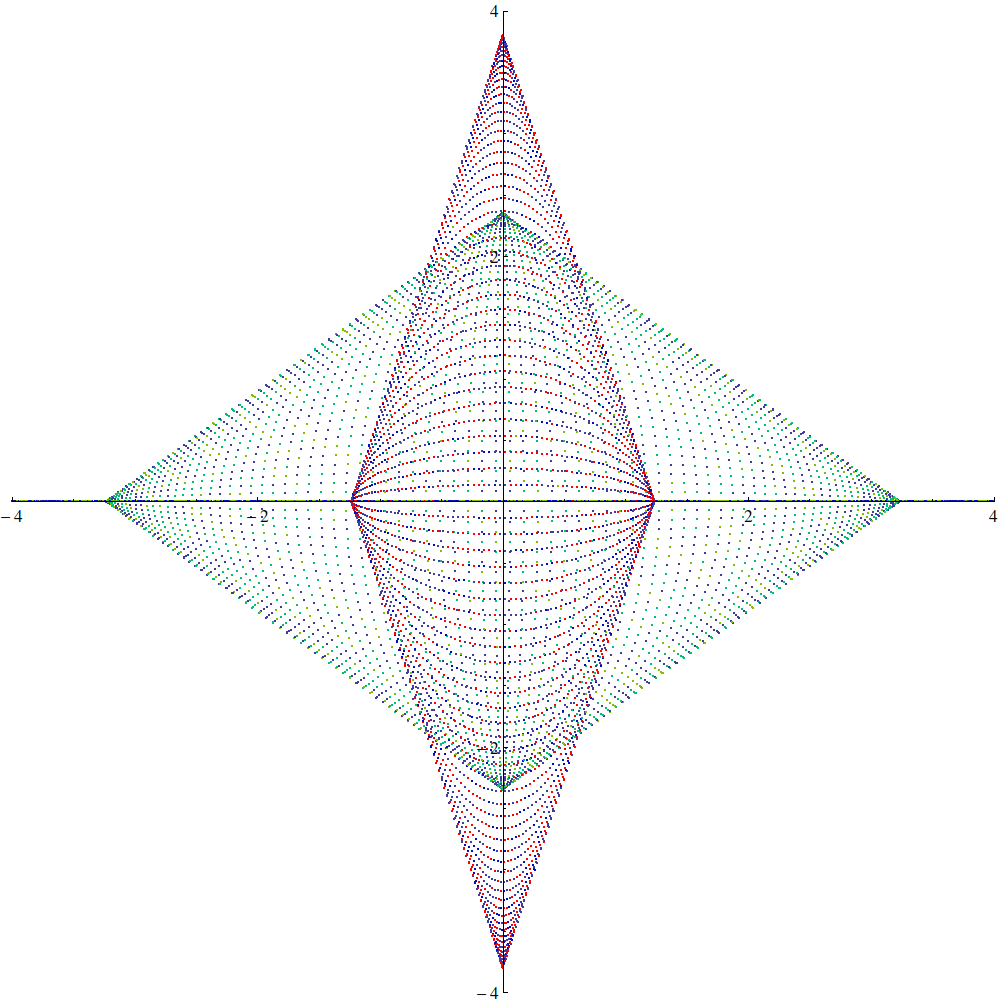}
    		\caption{$m=890$, $\Lambda = \langle 479 \rangle$}
    	\end{subfigure}	
    
    	\begin{subfigure}[b]{0.30\textwidth}
    		\includegraphics[width=\textwidth]{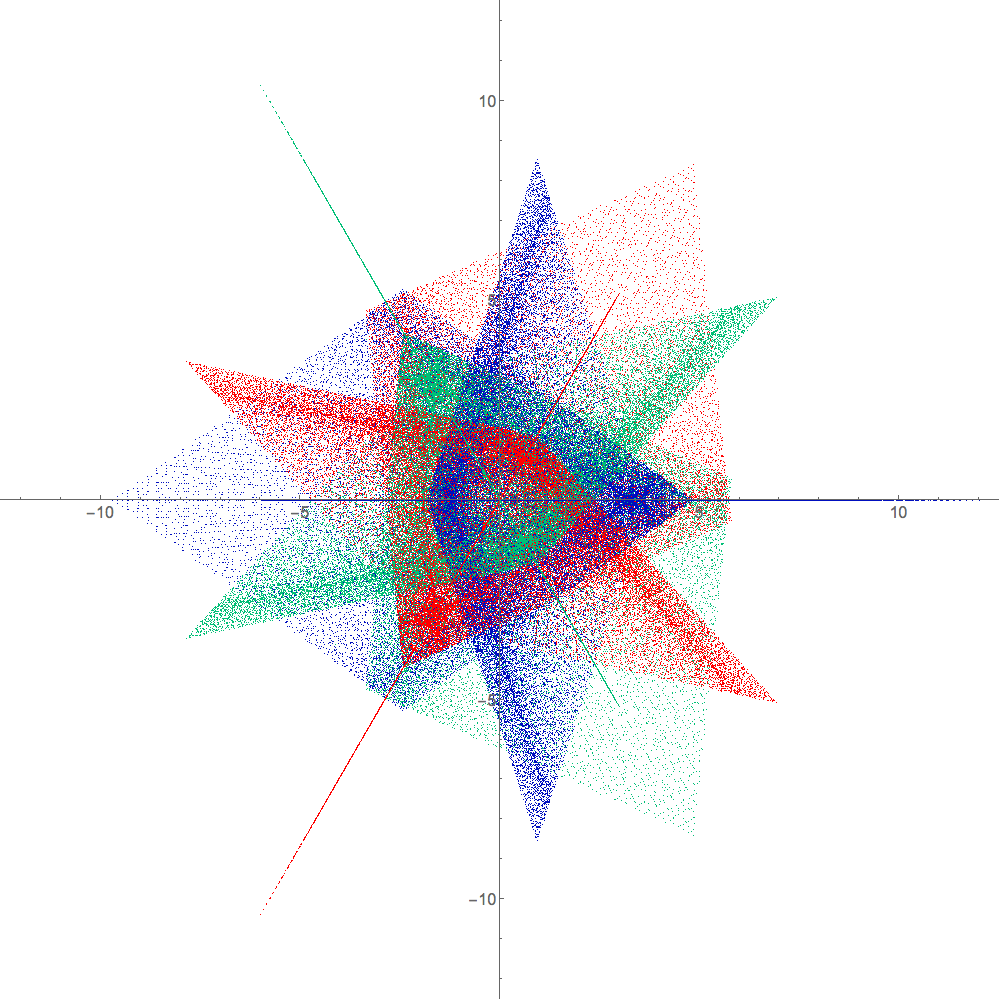}
    		\caption{$m=9015$, $\Lambda=\langle 2284 \rangle$}
    	\end{subfigure}
    	\quad
    	\begin{subfigure}[b]{0.30\textwidth}
    		\includegraphics[width=\textwidth]{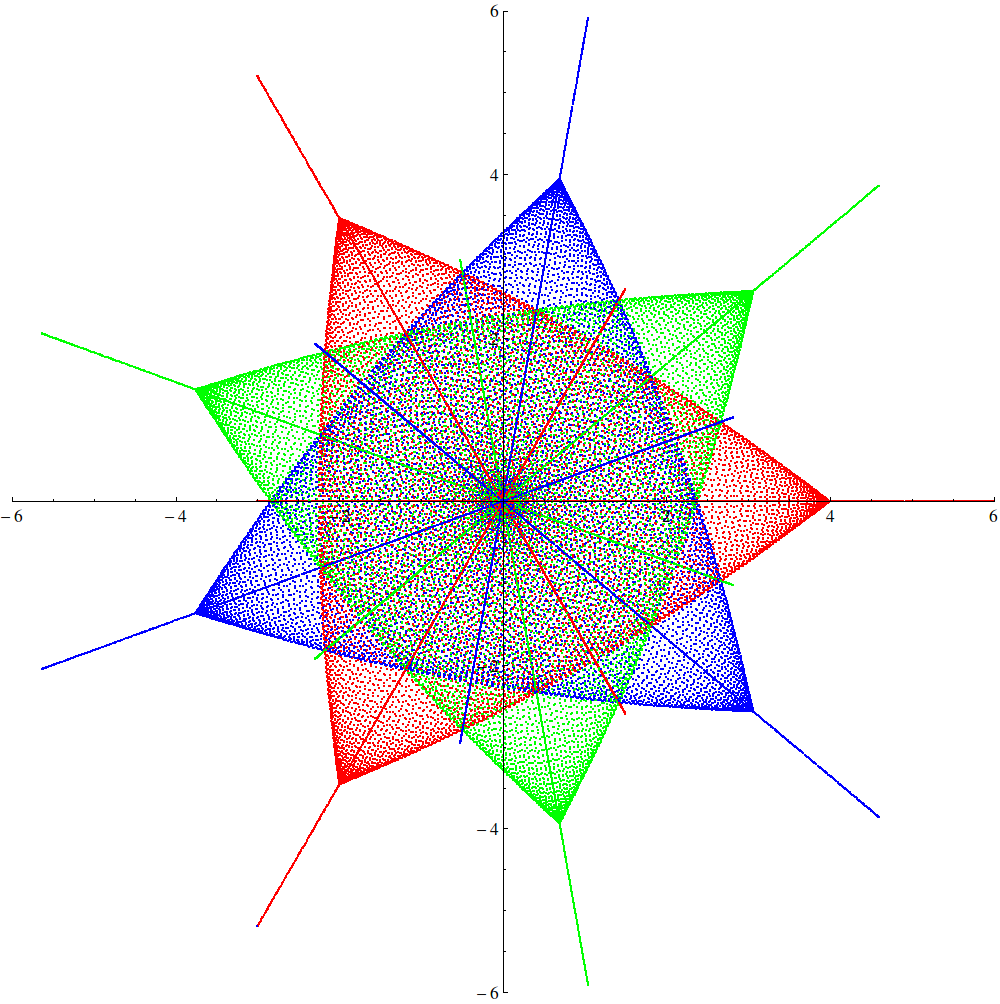}
    		\caption{$m=1413$, $\Lambda=\langle 13 \rangle$}
    	\end{subfigure}
    	\quad
    	\begin{subfigure}[b]{0.30\textwidth}
    		\includegraphics[width=\textwidth]{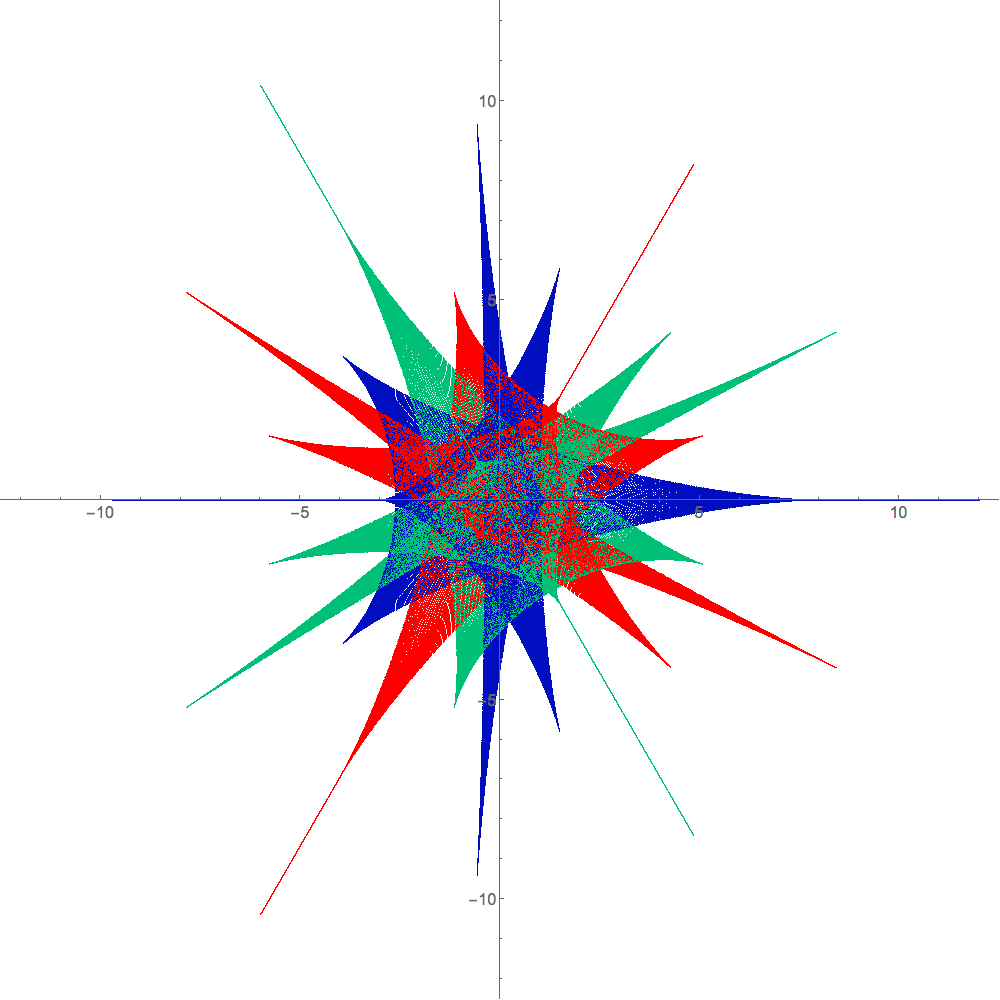}
    		\caption{$m=9015$, $\Lambda=\langle 577 \rangle$}
    	\end{subfigure}
    
    	\caption{Plots in $\C$ of the values of generalized Kloosterman sums $K(a,b,m,\Lambda)$ for $0 \leq a,b < m$.
	These plots are symmetric across the real line since $\overline{K(a,b,p,d)}=K(-a,-b,p,d)$.
    	The images are colored according to the value of $a+b \pmod{k}$ for a carefully selected $k$.}
    	\label{fig:various1}
    \end{figure}
    
    Like classical Kloosterman sums, generalized Kloosterman sums enjoy a certain multiplicative property. 
        If $m = m_1m_2$, in which $(m_1,m_2)=1$, 
        $r_1 \equiv m_1^{-1} \pmod{m_2}$, $r_2 \equiv m_2^{-1} \pmod{m_1}$,
        $\omega_1 = \omega \pmod {m_1}$, and $\omega_2 = \omega \pmod {m_2}$, then
        \begin{equation}\label{eq:Multiplicative}
            K\big(a,b,m,\langle \omega \rangle \big) = K\big(r_2a, r_2b, m_1, \langle \omega_1 \rangle\big)K\big(r_1a, r_1b, m_2, \langle  \omega_2 \rangle\big).
        \end{equation}        
        This follows immediately from the Chinese Remainder Theorem.    
    Consequently, we tend to focus on prime or prime power moduli;
    see Figure \ref{fig:decomp}.  Since the group of units modulo an odd prime power is cyclic, most of our attention is restricted to the case
    where $\Gamma = \langle \omega \rangle$ is a cyclic group of units.
    
    \begin{figure}
    	\centering
    	\begin{subfigure}[b]{0.30\textwidth}
    		\includegraphics[width=\textwidth]{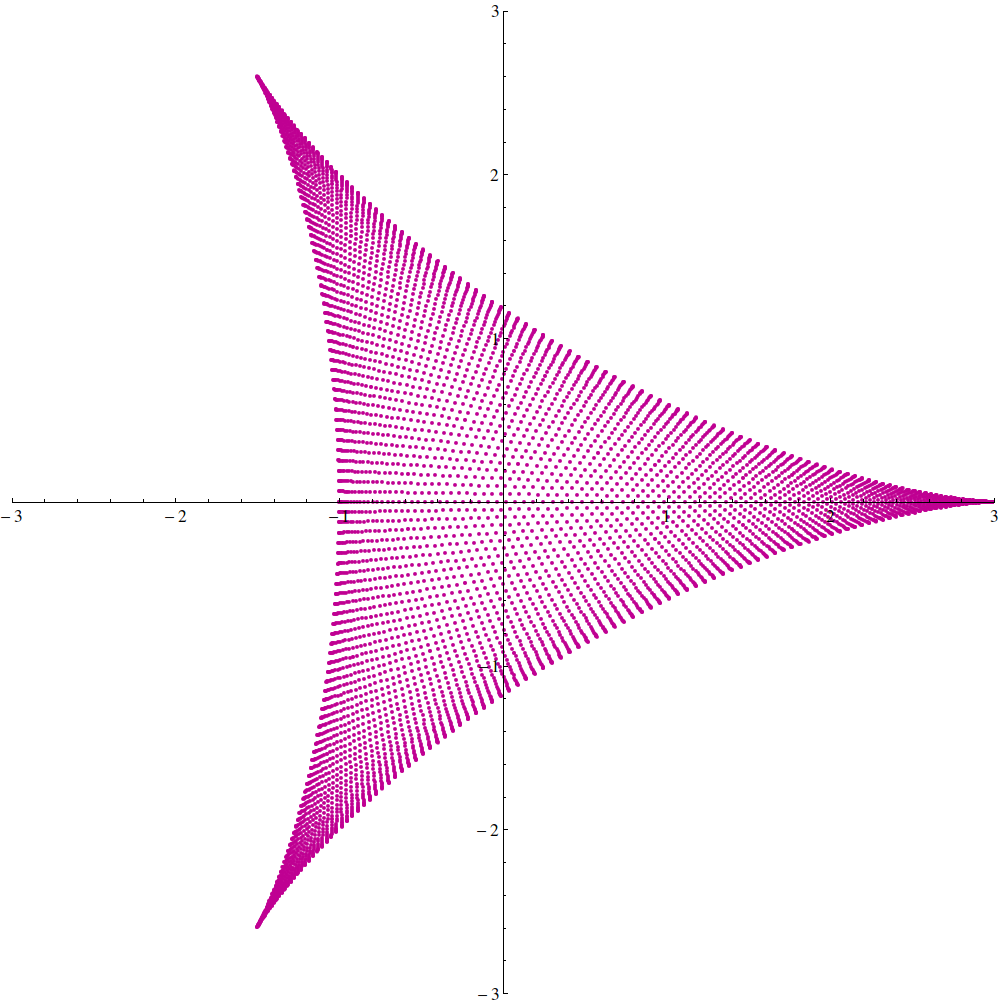}
    		\caption{$p=199$, $d=3$}
		\label{fig:decomp:a}
    	\end{subfigure}
    	\quad
    	\begin{subfigure}[b]{0.30\textwidth}
    		\includegraphics[width=\textwidth]{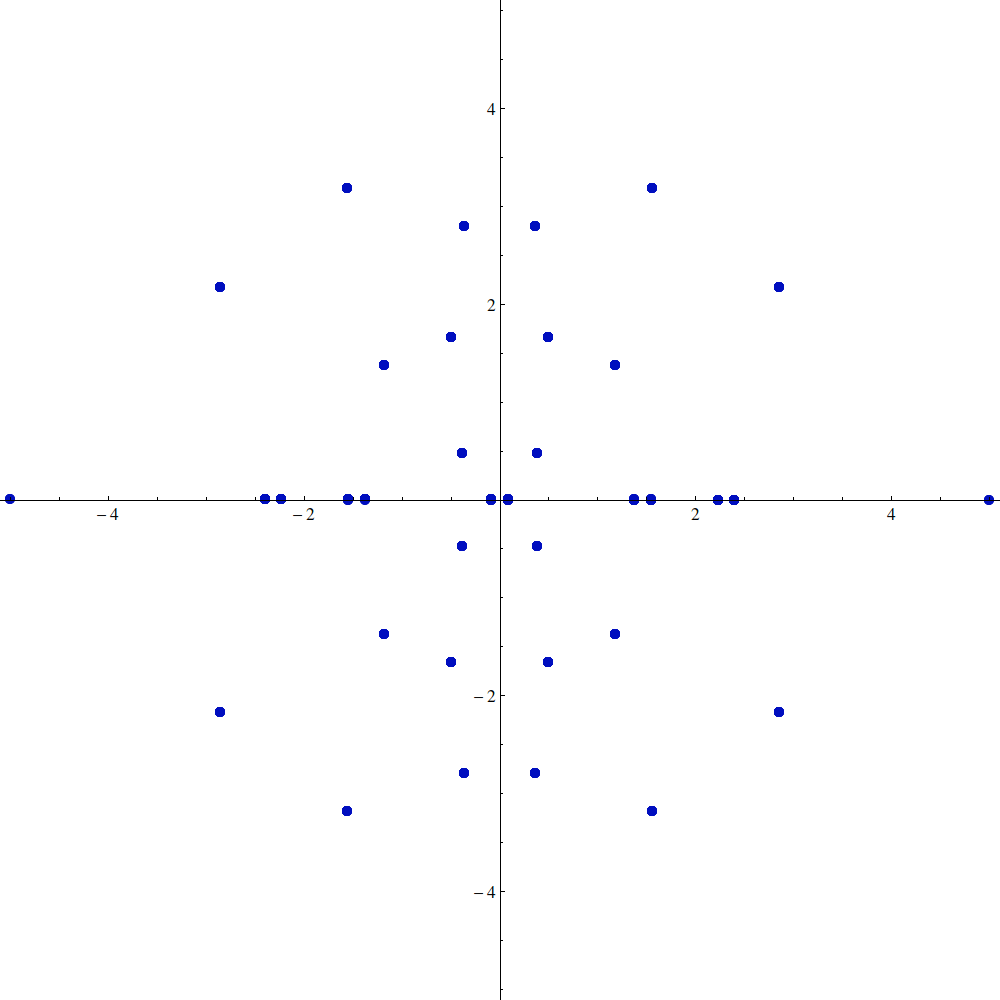}
    		\caption{$m=22$, $\Lambda=\langle 5 \rangle$}
    	\end{subfigure}
    	\quad
    	\begin{subfigure}[b]{0.30\textwidth}
    		\includegraphics[width=\textwidth]{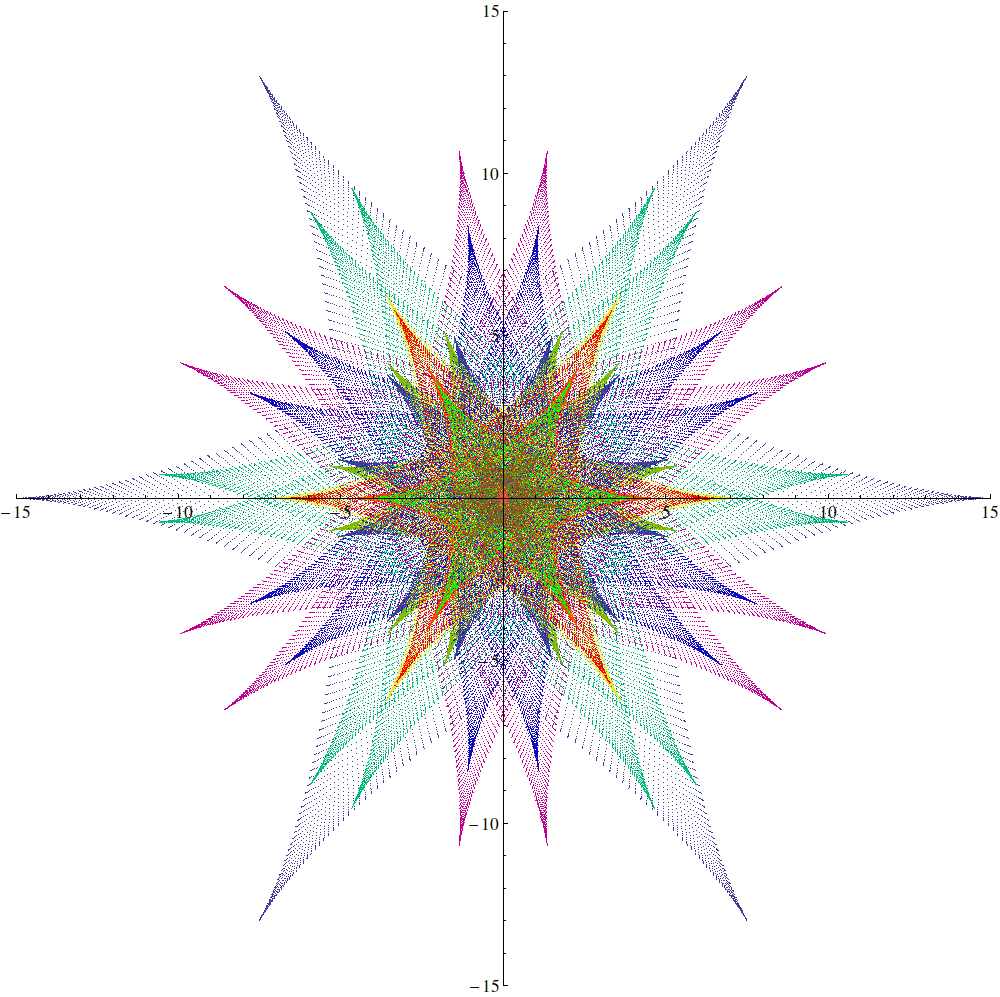}
    		\caption{$m=4378$, $\Lambda=\langle 291 \rangle$}	
    		\label{fig:decomp:c}	
    	\end{subfigure}
    
    	\caption{The values $K(a,b,4378,\langle 291 \rangle)$ are obtained from the values of
    	$K(\,\cdot\,,\,\cdot\,,199,\langle 92 \rangle)$ and $K(\,\cdot\,,\,\cdot\,,22,\langle 5 \rangle)$
    	using \eqref{eq:Multiplicative}.  The image in Figure \ref{fig:decomp:a}
	is explained by Theorem \ref{thm:hypocycloid}.
    	The points $K(a,b,4378,\langle 291 \rangle)$ in Figure \ref{fig:decomp:c} are colored according to the value of $a+b \pmod{22}$. }
    	\label{fig:decomp}
    \end{figure}

    Additional motivation for our work stems from the fact that generalized Kloosterman sums
    are examples of supercharacters.
    The theory of supercharacters, introduced in 2008 by P.~Diaconis and I.M.~Isaacs \cite{diaconis2008supercharacters}, has emerged as
    a powerful tool in combinatorial representation theory.  
    Certain exponential sums of interest in number theory, such as 
    Ramanujan, Gauss, Heilbronn, and classical Kloosterman sums, arise as supercharacter values on abelian groups
    \cite{brumbaugh2012supercharacters,duke,gausscyclotomy, fowler, heilbronn}. 
    In the terminology of \cite{brumbaugh2012supercharacters}, the functions \eqref{eq:Kabmg} arise
    by letting $n =m$, $d= 2$, and $\Gamma = \{ \operatorname{diag}(u,u^{-1}) : u \in \Lambda\}$.

\section{Hypocycloids}

    In what follows, we let $\phi$ denote the Euler totient function.
    If $q = p^{\alpha}$ is an odd prime power, then $(\Z/q\Z)^{\times}$ is cyclic. Thus, for each divisor $d$ of $\phi(q) = p^{\alpha-1}(p - 1)$,
    there is a unique subgroup $\Lambda$ of $(\Z/q\Z)^\times$ of order $d$.  In this case, we write
    \begin{equation*}
        K(a,b,q,d)=\sum_{u^d=1} e\left( \frac{au+bu^{-1}}{q} \right)
    \end{equation*}
    instead of $K(a,b,q,\Lambda)$.
    Under certain conditions, these generalized Kloosterman sums display remarkable asymptotic behavior.
    If $d$ is a fixed odd prime and $q=p^\alpha$ is an odd prime power with $p \equiv 1 \pmod{d}$, then
    the values $K(a,b,q,d)$ for $0 \leq a,b < q$ are contained in the closure $\h_d$ of the bounded region determined by the 
    $d$-cusped hypocycloid given by
    \begin{equation}\label{eq:Parameter}
        \theta\mapsto (d-1)e^{i\theta}+e^{(1-d)i\theta};
    \end{equation}
    see Figure \ref{Figure:Hypocycloid}.
    As the prime power $q=p^\alpha$ for $p \equiv 1 \pmod{d}$ tends to infinity, the values $K(a,b,q,d)$
    ``fill out'' $\h_d$; see Figure \ref{fig:hypocycloid}. 
    Similar asymptotic behavior has been observed in Gaussian periods
    \cite{gausscyclotomy, duke} and certain exponential sums related to the symmetric group \cite{brumbaugh2013graphic}.
    
    \begin{figure}
    	\begin{subfigure}[b]{0.25\textwidth}
    		\includegraphics[width=\textwidth]{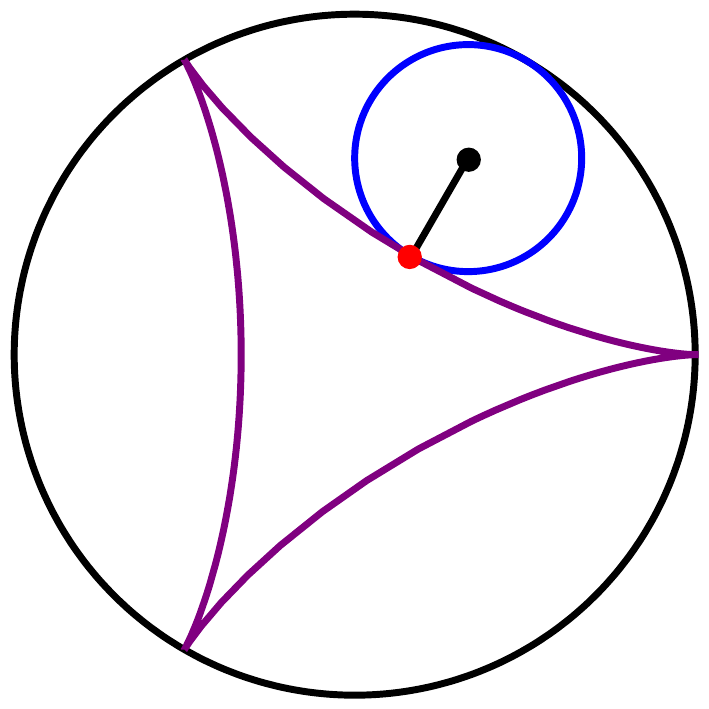}
    		\caption{$d=3$\\a deltoid}
    	\end{subfigure}
    	\qquad
    	\begin{subfigure}[b]{0.25\textwidth}
    		\includegraphics[width=\textwidth]{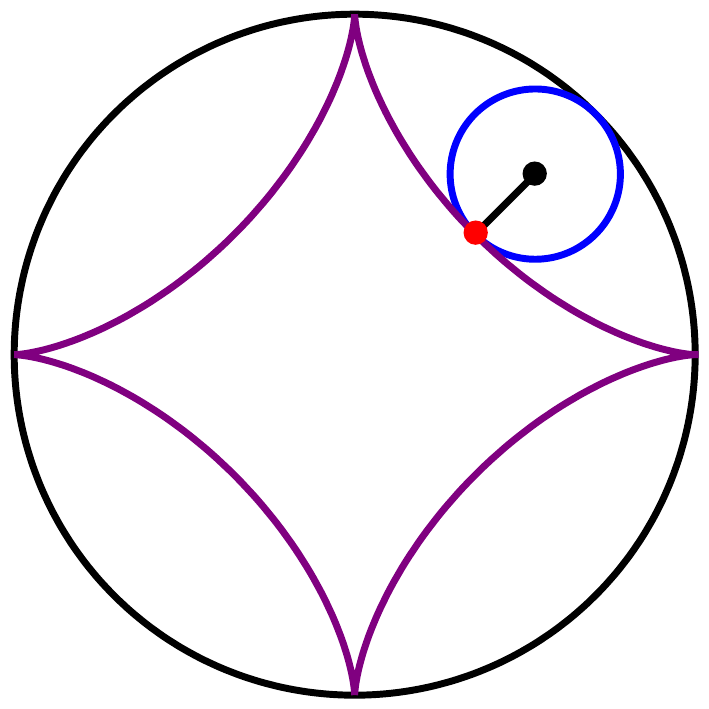}
    		\caption{$d=4$\\an astroid}
    	\end{subfigure}
    	\qquad
    	\begin{subfigure}[b]{0.25\textwidth}
    		\includegraphics[width=\textwidth]{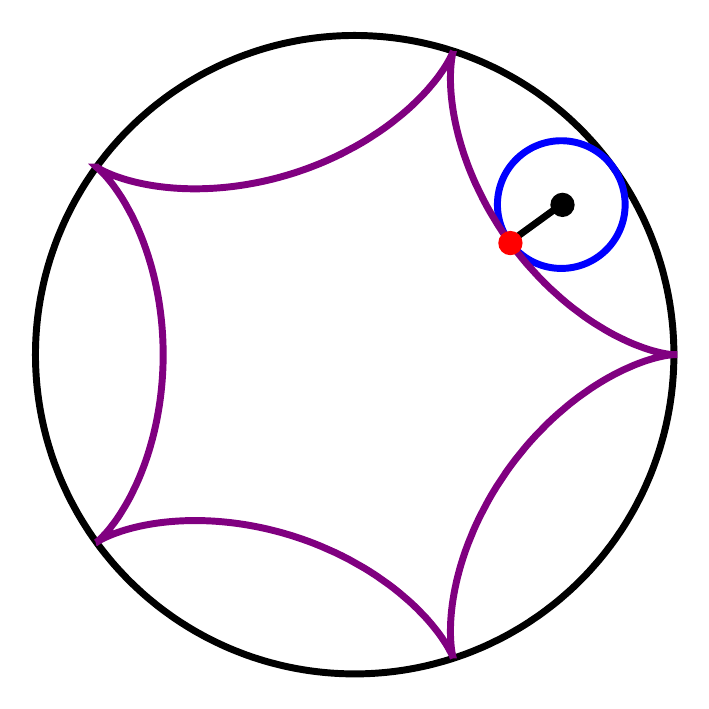}
    		\caption{$d=5$\\{\quad}}
    	\end{subfigure}
    	\caption{The curve \eqref{eq:Parameter} is obtained by rolling a circle of radius $1$ within the circle $|z| = d$; this is a hypocycloid
	with $d$ cusps.  The set $\h_d$ is the closure of the bounded region determined by the hypocycloid.}
    	\label{Figure:Hypocycloid}
    \end{figure}    
    
    To be more precise, we require a few words about uniformly distributed sets.
    A sequence $S_n$ of finite subsets of $[0,1)^k$ is \emph{uniformly distributed} if
	\begin{equation*}
		\lim_{n\to\infty}\sup_B \left| \frac{ |B\cap S_n|}{|S_n|} - \mu(B)\right| = 0,
	\end{equation*}
	where the supremum runs over all boxes $B = [a_1,b_1)\times\cdots \times [a_k,b_k)$ in $[0,1)^k$
	and $\mu$ denotes $k$-dimensional Lebesgue measure.  
	If $S_n$ is a sequence of finite subsets of $\R^k$, then
	$S_n$ is \emph{uniformly distributed mod $1$} if the sets 
	\begin{equation*}
            	\big\{  \big( \{x_1\},\{x_2\},\ldots,\{x_k\}\big) : (x_1,x_2,\ldots,x_k) \in S_n\big\}
	\end{equation*}
	are uniformly distributed in $[0,1)^k$.  Here $\{x\}$ denotes the fractional part $x-\lfloor x \rfloor$
	of a real number $x$.  The following result tells us that a certain sequence of sets closely
	related to generalized Kloosterman sums is uniformly distributed modulo $1$.

    \begin{figure}
    	\centering
    	\begin{subfigure}[b]{0.30\textwidth}
    		\includegraphics[width=\textwidth]{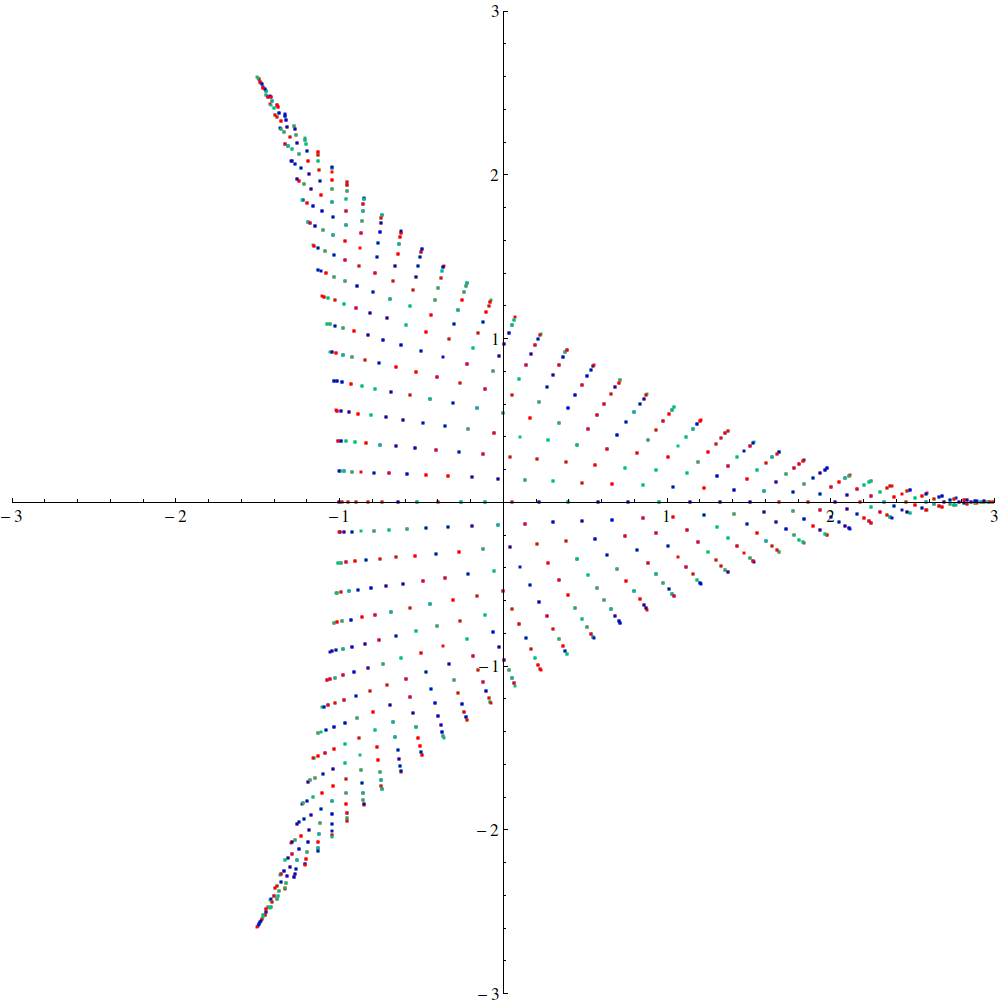}
    		\caption{$p=67$, $d=3$}
    	\end{subfigure}
	\quad
    	\begin{subfigure}[b]{0.30\textwidth}
    		\includegraphics[width=\textwidth]{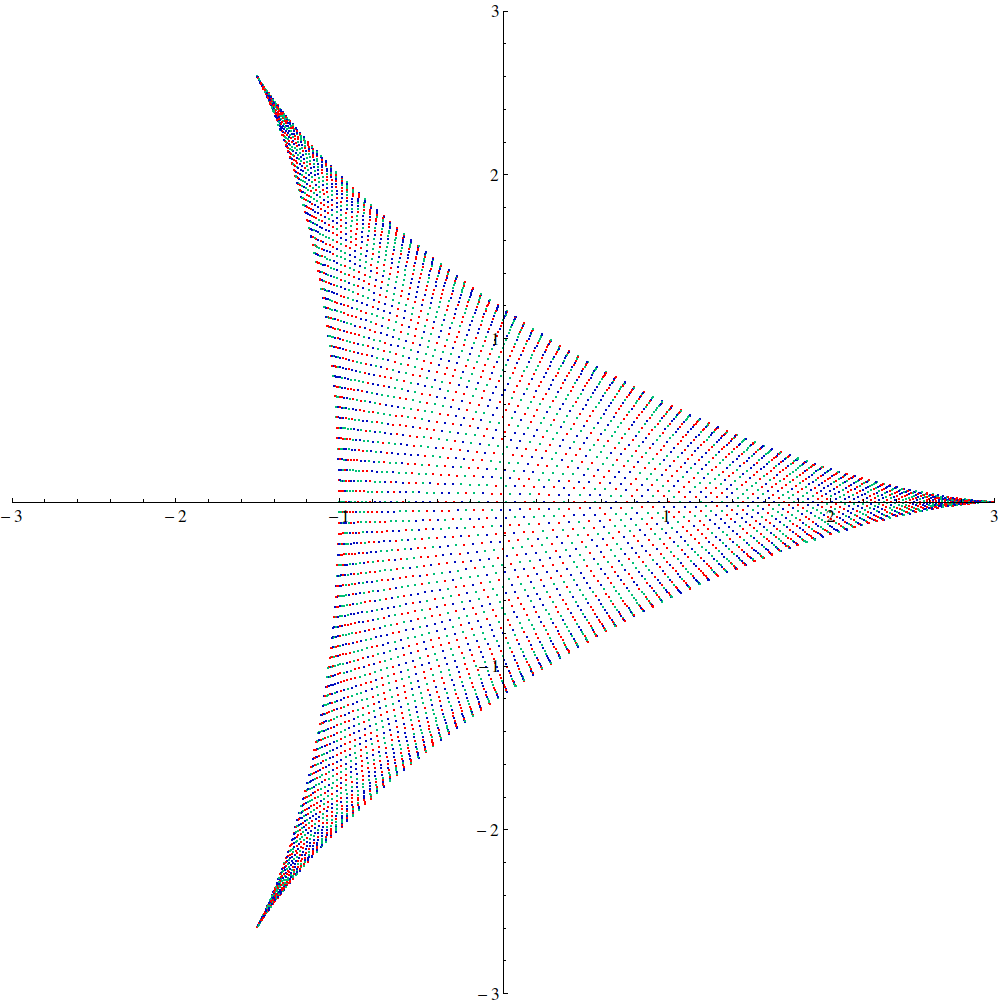}
    		\caption{$p=193$, $d=3$}
    	\end{subfigure}
	\quad
    	\begin{subfigure}[b]{0.30\textwidth}
    		\includegraphics[width=\textwidth]{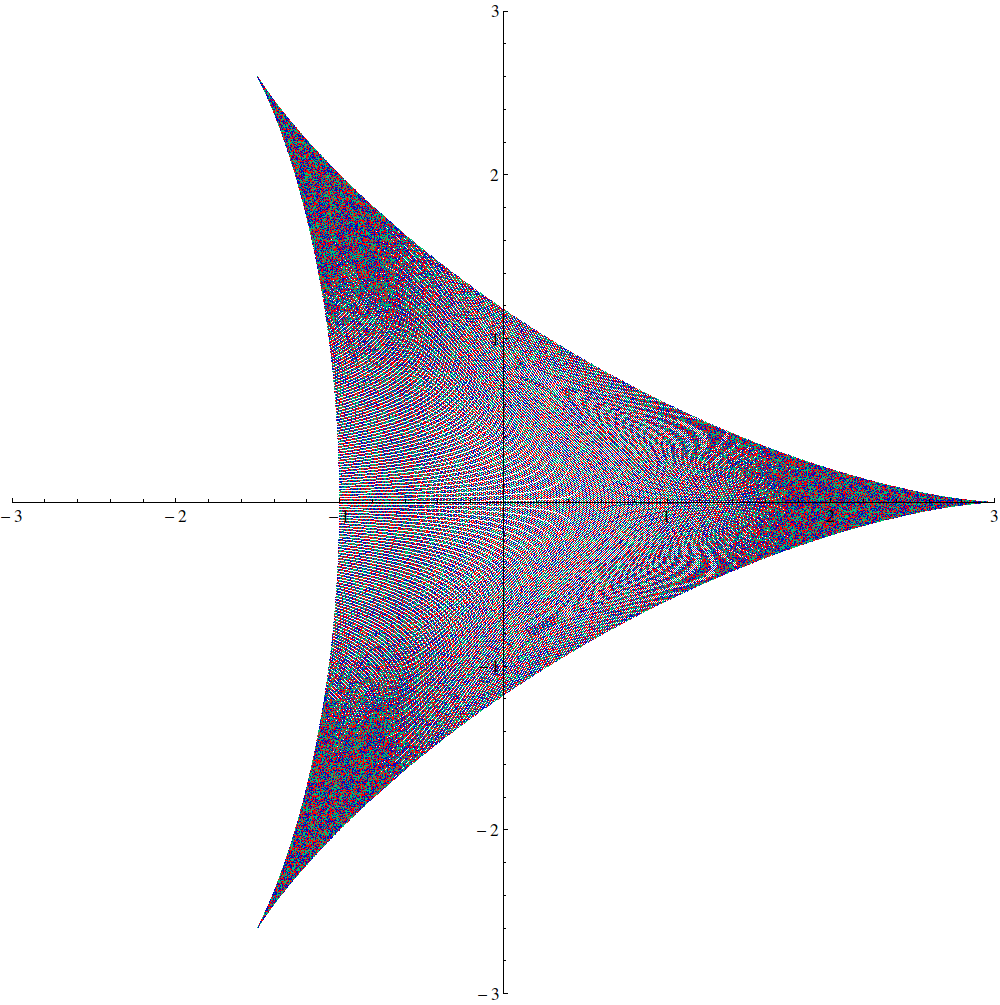}
    		\caption{$p=1279$, $d=3$}
    	\end{subfigure}
    
    	\begin{subfigure}[b]{0.30\textwidth}
    		\includegraphics[width=\textwidth]{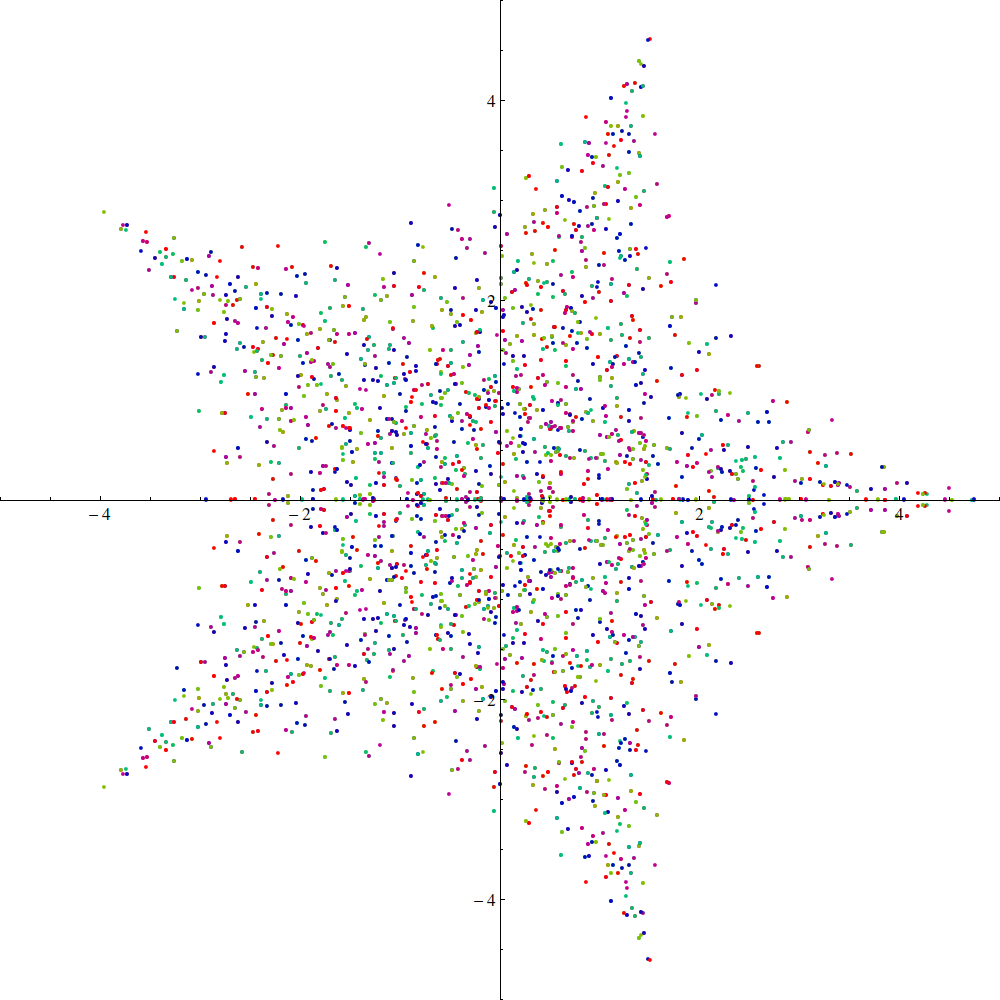}
    		\caption{$p=151$, $d=5$}
    	\end{subfigure}
	\quad
    	\begin{subfigure}[b]{0.30\textwidth}
    		\includegraphics[width=\textwidth]{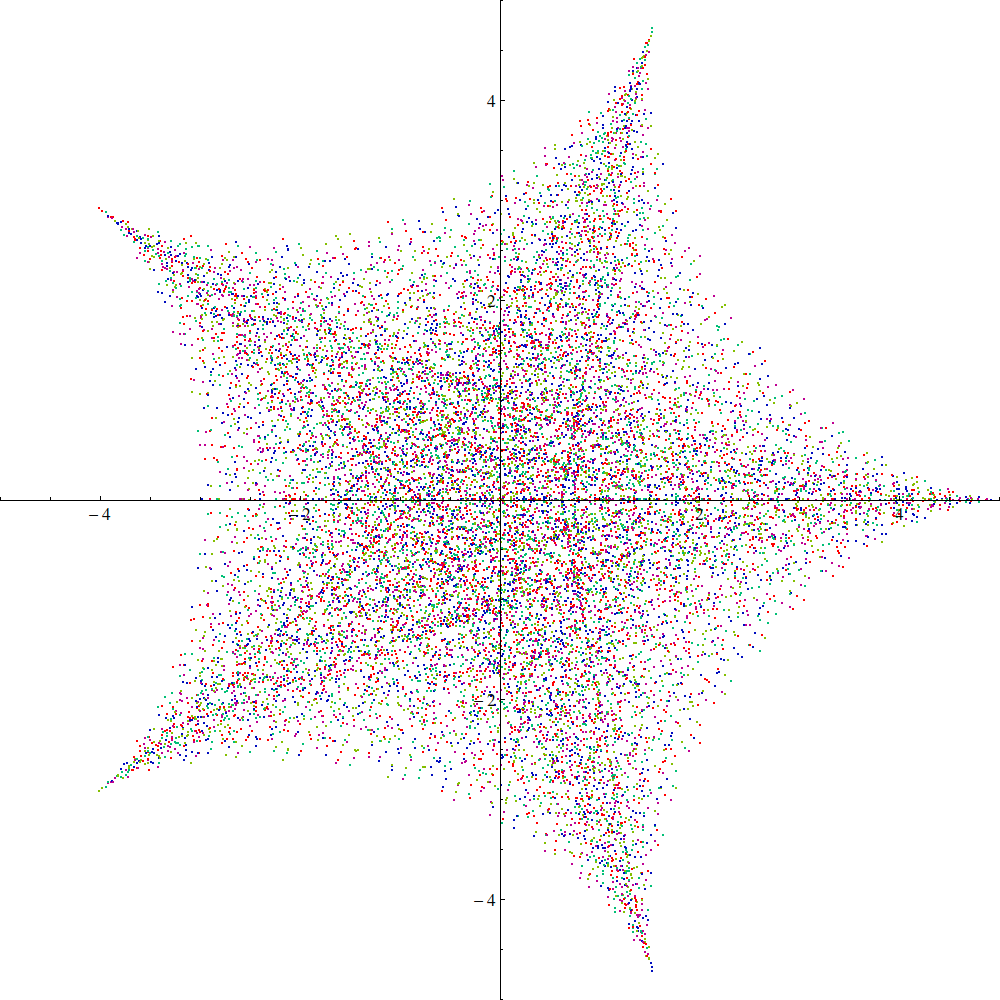}
    		\caption{$p=431$, $d=5$}
    	\end{subfigure}
	\quad
    	\begin{subfigure}[b]{0.30\textwidth}
    		\includegraphics[width=\textwidth]{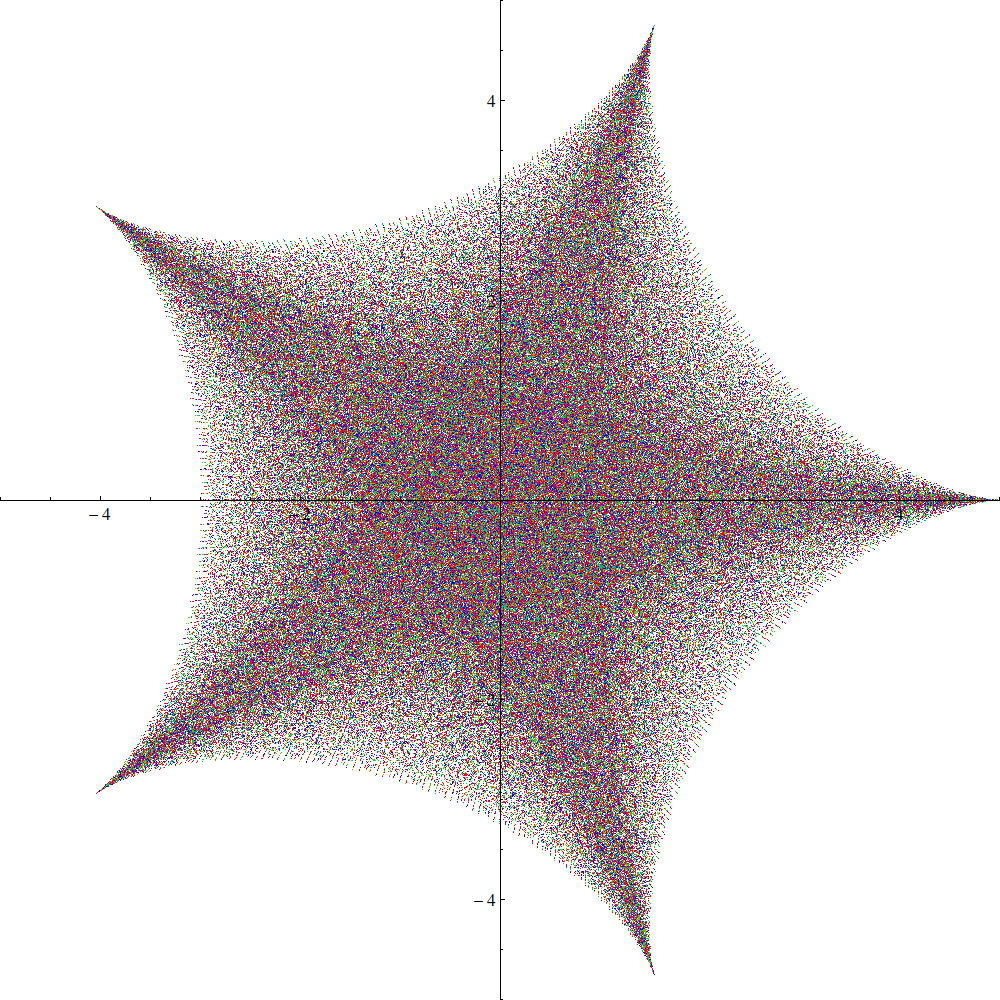}
    		\caption{$p=2221$, $d=5$}
    	\end{subfigure}
    
    	\begin{subfigure}[b]{0.30\textwidth}
    		\includegraphics[width=\textwidth]{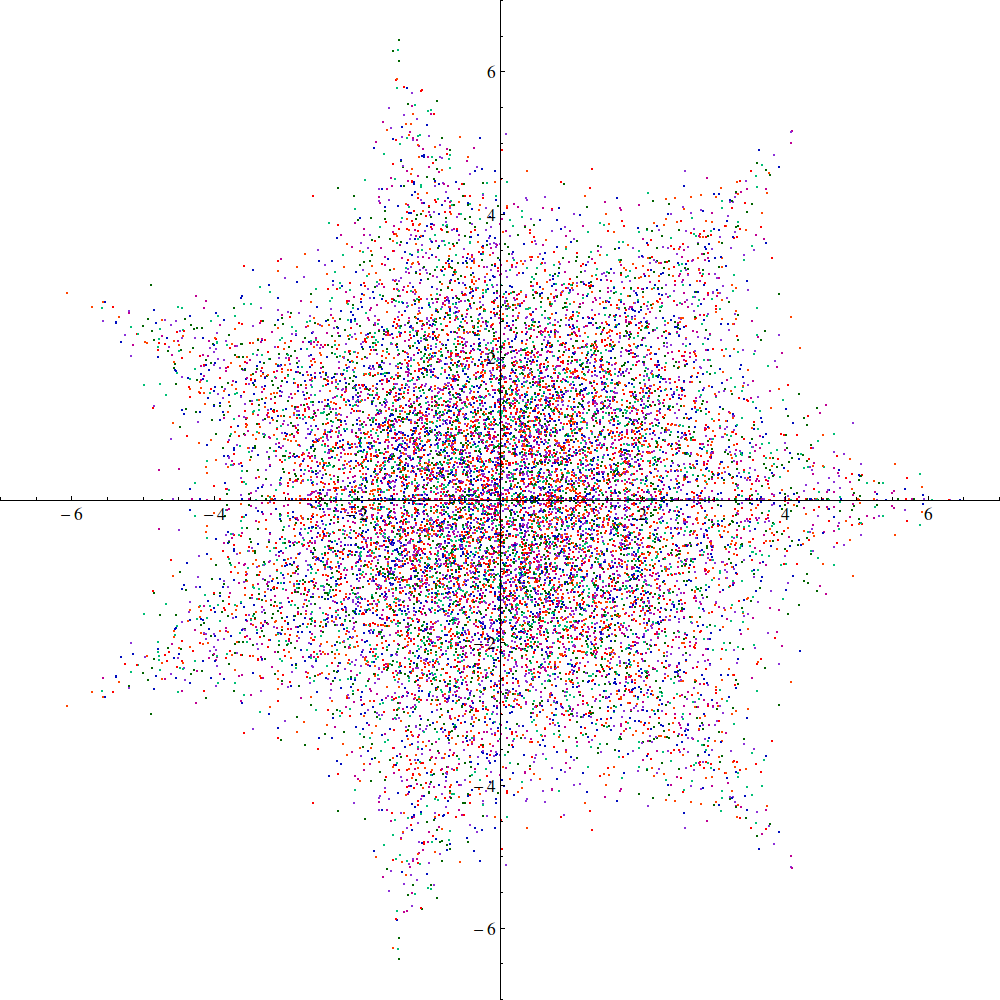}
    		\caption{$p=491$, $d=7$}
    	\end{subfigure}
	\quad
    	\begin{subfigure}[b]{0.30\textwidth}
    		\includegraphics[width=\textwidth]{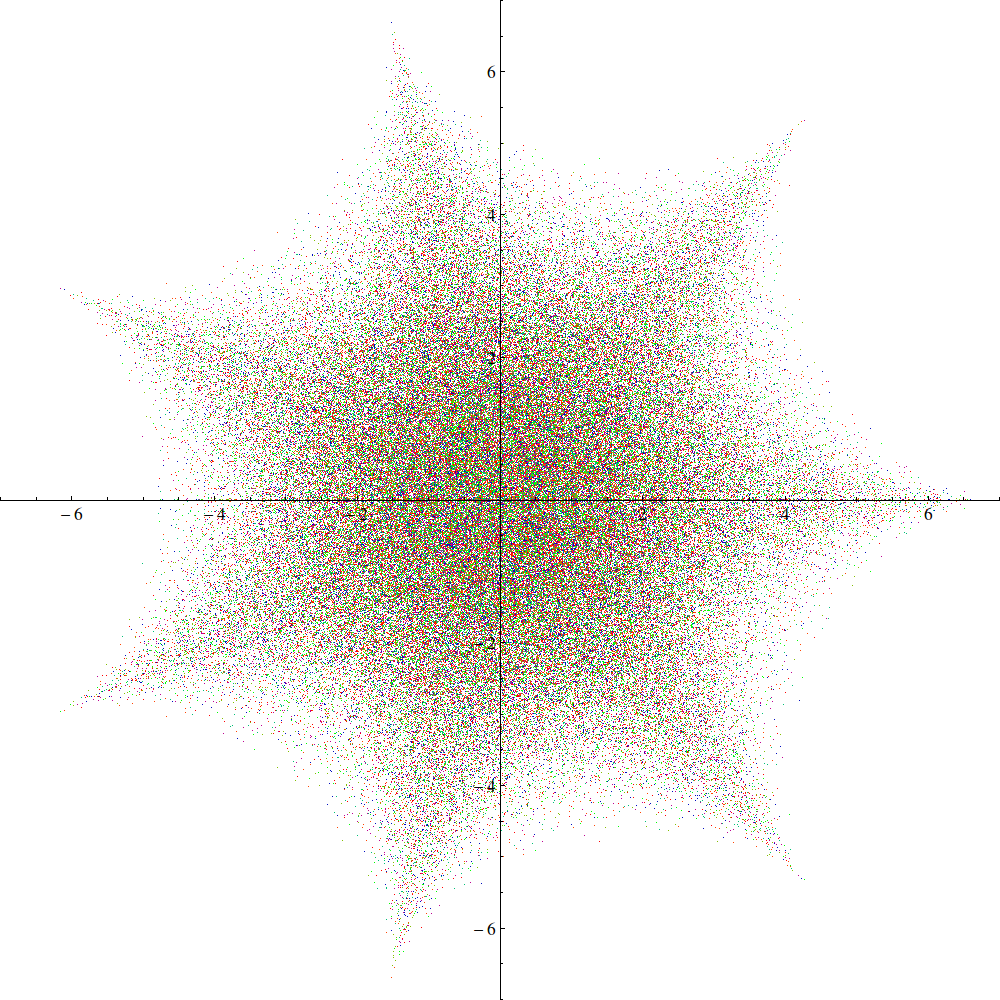}
    		\caption{$p=1597$, $d=7$}
    	\end{subfigure}
	\quad
    	\begin{subfigure}[b]{0.30\textwidth}
    		\includegraphics[width=\textwidth]{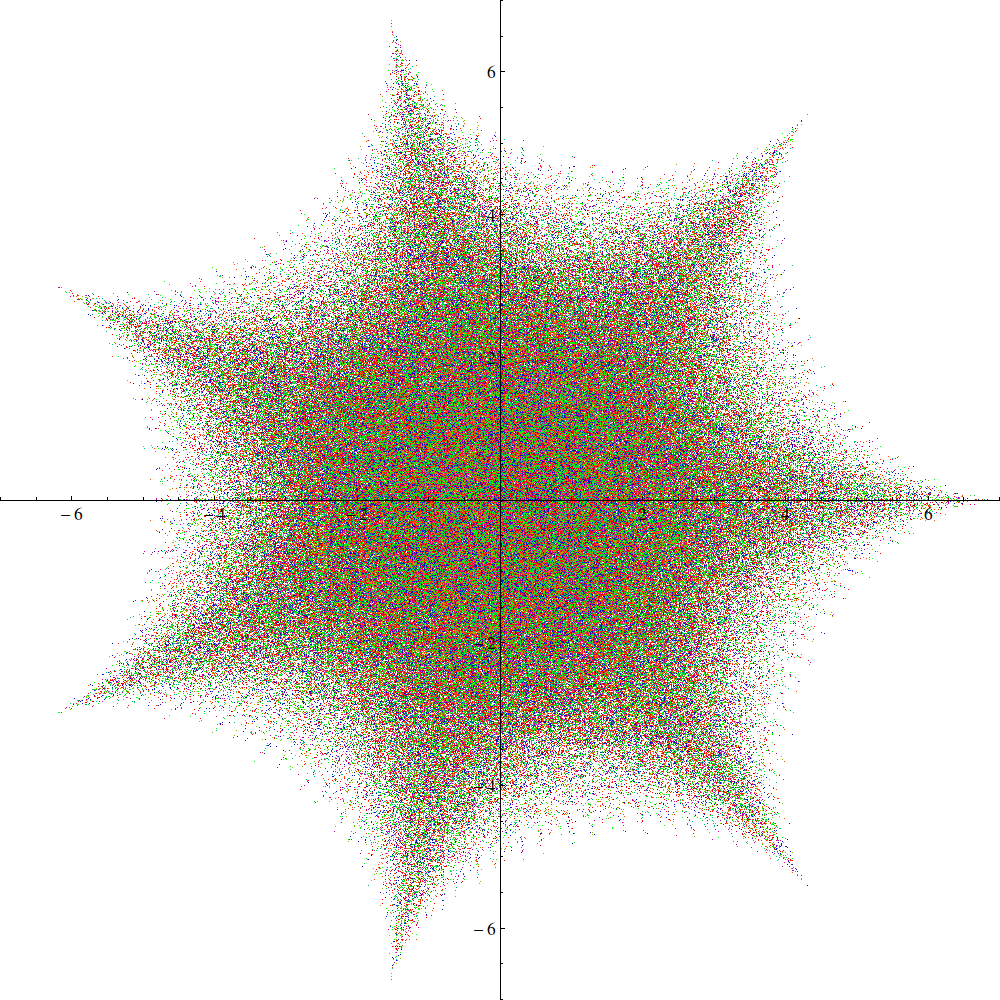}
    		\caption{$p=2969$, $d=7$}
    	\end{subfigure}
    	\caption{For the primes $d = 3,5,7$, the values $K(a,b,p,d)$ with $0\leq a,b \leq p-1$ ``fill out'' the closure $\h_d$ of the bounded
	region determined by the hypocycloid \eqref{eq:Parameter}; see Theorem \ref{thm:hypocycloid}.}
    	\label{fig:hypocycloid}
    \end{figure}

    \begin{lemma}\label{Lemma:UniformDistribution}
        Fix a positive integer $d$.  
        For each odd prime power $q=p^\alpha$ with $p \equiv 1 \pmod{d}$, let $\omega_q$ denote a 
        primitive $d$th root of unity modulo in $\Z/q\Z$.  Let $\omega_q^{-1}$ denote the
        inverse of $\omega_q$ modulo $q$.  For each fixed $b \in \{0,1,\ldots,q-1\}$, the sets
        \begin{equation}\label{eq:Sq}
            S_q=\Big\{\Big(\frac{a+b}{q},\frac{a\omega_q+b\omega_q^{-1}}{q},\ldots,
            \frac{a\omega_q^{\phi(d)-1}+b\omega_q^{-\phi(d)+1}}{q}\Big) : 0\leq a \leq q-1\Big\}
        \end{equation}
        in $\R^{\phi(d)}$ are uniformly distributed modulo $1$ as $q \to \infty$.  
    \end{lemma}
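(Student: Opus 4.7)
The natural tool here is Weyl's criterion for uniform distribution modulo~$1$: it suffices to show that for every nonzero integer vector $(n_0, n_1, \ldots, n_{\phi(d)-1})$, the exponential sum
\[
W_q \;:=\; \frac{1}{q} \sum_{a=0}^{q-1} \e{\sum_{j=0}^{\phi(d)-1} n_j \cdot \frac{a\omega_q^j + b \omega_q^{-j}}{q}}
\]
tends to $0$ as $q\to\infty$. Setting $P(x) = \sum_{j=0}^{\phi(d)-1} n_j x^j \in \Z[x]$, a nonzero polynomial of degree less than $\phi(d)$, one factors
\[
W_q \;=\; \e{\frac{b P(\omega_q^{-1})}{q}} \cdot \frac{1}{q}\sum_{a=0}^{q-1} \e{\frac{a P(\omega_q)}{q}},
\]
and the inner geometric sum vanishes unless $q \mid P(\omega_q)$. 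The whole task reduces to showing that $q \nmid P(\omega_q)$ for all sufficiently large $q$.

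The plan proceeds in two steps. First, I would establish the auxiliary fact $q \mid \Phi_d(\omega_q)$, where $\Phi_d$ is the $d$-th cyclotomic polynomial and $\omega_q$ is interpreted via an integer lift. Since $p \equiv 1 \pmod d$, the kernel of the reduction $(\Z/p^\alpha\Z)^\times \to (\Z/p\Z)^\times$ has order $p^{\alpha-1}$, coprime to $d$; hence $\omega_q$ reduces modulo~$p$ to an element of order exactly $d$, which must be a root of $\Phi_d \bmod p$. Thus $p \mid \Phi_d(\omega_q)$, while for each proper divisor $e$ of $d$, we have $p \nmid \Phi_e(\omega_q)$ (else $\omega_q \bmod p$ would have order dividing $e<d$). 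Applying this to the factorization $\omega_q^d - 1 = \prod_{e \mid d} \Phi_e(\omega_q)$ in $\Z$, together with $p^\alpha \mid \omega_q^d - 1$, a comparison of $p$-adic valuations forces $p^\alpha \mid \Phi_d(\omega_q)$.

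Second, I would invoke B\'ezout. Since $\Phi_d$ is irreducible over $\Q$ of degree $\phi(d) > \deg P$, the polynomials $P$ and $\Phi_d$ are coprime in $\Q[x]$. Clearing denominators in a B\'ezout identity produces $A(x) P(x) + B(x) \Phi_d(x) = N$ for some $A, B \in \Z[x]$ and a nonzero integer $N$ depending only on $P$ and $d$ (one may in fact take $N = \mathrm{Res}(P, \Phi_d)$). Evaluating at $x = \omega_q$ and reducing modulo~$q$, and using $q \mid \Phi_d(\omega_q)$ from the previous step, we obtain $A(\omega_q) P(\omega_q) \equiv N \pmod q$. If $q \mid P(\omega_q)$ then $q \mid N$, which fails once $q > |N|$. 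Hence $W_q = 0$ for all sufficiently large $q$, and Weyl's criterion delivers uniform distribution modulo~$1$.

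The main obstacle, in my view, is the prime-power strengthening in the first step: passing from the easy $p \mid \Phi_d(\omega_q)$ (which follows directly from the cyclic structure of $(\Z/p\Z)^\times$) to the full divisibility $p^\alpha \mid \Phi_d(\omega_q)$ requires the coprimality-of-cyclotomic-values argument. Once that is in hand, the B\'ezout step is clean and even yields an effective threshold, with the Weyl sum $W_q$ vanishing identically for $q > |\mathrm{Res}(P, \Phi_d)|$.
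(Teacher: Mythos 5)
Your proof is correct, and its outer skeleton matches the paper's: both apply Weyl's criterion and observe that the $b$-dependent part of each tuple is a fixed translate, contributing only a unimodular phase $e(bP(\omega_q^{-1})/q)$ that is irrelevant to the vanishing of the Weyl sums. Where you diverge is in how the remaining sum $\frac{1}{q}\sum_{a=0}^{q-1}e(aP(\omega_q)/q)$ is killed: the paper simply cites Myerson's theorem that the sets $\{\frac{a}{q}(1,\omega_q,\ldots,\omega_q^{\phi(d)-1}):0\le a\le q-1\}$ are uniformly distributed mod $1$, whereas you supply a self-contained proof of exactly the special case needed. Your two steps are sound: the lifting argument correctly shows $\omega_q$ has order exactly $d$ modulo $p$ (since the kernel of $(\Z/p^\alpha\Z)^\times\to(\Z/p\Z)^\times$ has order $p^{\alpha-1}$ coprime to $d\mid p-1$), so $p\nmid\Phi_e(\omega_q)$ for proper divisors $e\mid d$, and the valuation comparison in $\omega_q^d-1=\prod_{e\mid d}\Phi_e(\omega_q)$ then yields $q\mid\Phi_d(\omega_q)$; and the B\'ezout identity $AP+B\Phi_d=\mathrm{Res}(P,\Phi_d)\ne 0$ (valid because $\deg P<\phi(d)$ and $\Phi_d$ is irreducible over $\Q$) rules out $q\mid P(\omega_q)$ once $q$ exceeds the resultant. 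A nice bonus of your route is effectivity: each Weyl sum is not merely $o(1)$ but vanishes identically for $q>|\mathrm{Res}(P,\Phi_d)|$. One cosmetic remark: you correctly restrict Weyl's criterion to nonzero \emph{integer} frequency vectors, which is the proper formulation (the paper's phrasing with arbitrary nonzero real $\vec{y}$ is a slip).
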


    \begin{proof}
        Fix a positive integer $d$.
        Weyl's criterion asserts that $S_q$ is uniformly distributed modulo 
        $1$ if and only if $$\lim_{q \to \infty} \sum_{\vec{x} \in S_q}\e{\vec{x} \cdot \vec{y}}=0$$ for all nonzero $\vec{y} \in \R^{\phi(d)}$ \cite{weyl}.    
        Fix a nonzero $\vec{y} \in \R^{\phi(d)}$ and let $0 \leq b \leq q-1$. For $\vec{x} \in S_q$, write $\vec{x}= \vec{x}_1+\vec{x}_2$, in which
        \begin{equation*}
            \vec{x}_1=\Big(\frac{a}{q},\frac{a\omega_q}{q},\ldots,\frac{a\omega_q^{\phi(d)-1}}{q}\Big), 
            \qquad \vec{x}_2=\Big(\frac{b}{q},\frac{b\omega_q^{-1}}{q},\ldots,\frac{b\omega_q^{-\phi(d)+1}}{q}\Big);
        \end{equation*}
        Note that $\vec{x}_1$ depends on $a$ whereas $\vec{x}_2$ is fixed since we regard $b$ as constant.
        A result of Myerson \cite[Thm.~12]{myerson} (see also \cite[Lem.~6.2]{duke}) asserts that the sets
        \begin{equation*}
            \Big\{ \frac{a}{q}\big(1,\omega_q, \omega_q^2,\dots, \omega_q^{\phi(d)-1}\big) : 0 \leq a \leq q-1 \Big\}
            \subseteq [0,1)^{\phi(d)}
        \end{equation*}
        are uniformly distributed modulo $1$ as $q=p^\alpha$ tends to infinity; this requires the assumption that $p \equiv 1 \pmod{d}$.  Thus,
        \begin{equation*}
            \frac{1}{|S_q|} \sum_{\vec{x} \in S_q} \e{\vec{x} \cdot \vec{y}} 
            = \frac{1}{|S_q|} \sum_{a=0}^{q-1} e\big((\vec{x}_1+\vec{x}_2) \cdot \vec{y} \big) 
            = e(\vec{x}_2 \cdot \vec{y}) \frac{1}{|S_q|}\sum_{a=0}^{q-1} e(\vec{x}_1 \cdot \vec{y})
        \end{equation*}
        tends to zero, so Weyl's criterion ensures that the sets $S_q$ are uniformly distributed modulo $1$ as $q \to \infty$.
    \end{proof}

    The following theorem explains the asymptotic behavior exhibited in Figure \ref{fig:hypocycloid}.

    \begin{theorem}\label{thm:hypocycloid}
        Fix an odd prime $d$.
        \begin{enumerate}\addtolength{\itemsep}{0.5\baselineskip}
            \item[(a)] For each odd prime power $q=p^\alpha$ with $p \equiv 1 \pmod{d}$, the values of $K(a,b,q,d)$ are contained in $\h_d$, the closure of the region
            	bounded by the $d$-cusped hypocycloid centered at $0$ and with a cusp at $d$.
            \item[(b)] Fix $\epsilon > 0$, $b \in \Z$, and let $B_{\epsilon}(w)$ be an open ball of radius $\epsilon$ centered at $w\in \h_d$.
                For every sufficiently large odd prime power $q=p^\alpha$ with $p \equiv 1 \pmod{d}$,
                there exists $a \in \Z/q\Z$ so that $K(a,b,q,d) \in B_{\epsilon}(w)$.
        \end{enumerate}
    \end{theorem}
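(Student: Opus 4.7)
The plan is to reduce both parts of the theorem to the well-known characterization
$$\h_d = \Bigl\{ \sum_{k=0}^{d-1} z_k : |z_k| = 1 \text{ and } \prod_{k=0}^{d-1} z_k = 1 \Bigr\}$$
of the hypocycloidal region, combined with Lemma \ref{Lemma:UniformDistribution}. For part (a), I would let $\omega$ be a primitive $d$-th root of unity in $(\Z/q\Z)^\times$. Since $d$ is prime and $d \mid p - 1$, any element of $(\Z/q\Z)^\times$ congruent to $1$ modulo $p$ has order a power of $p$, so no such element can have order $d$; hence $\omega \not\equiv 1 \pmod p$, and $\omega - 1$ is a unit modulo $q = p^\alpha$. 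Factoring $\omega^d - 1 \equiv 0 \pmod q$ then yields $1 + \omega + \cdots + \omega^{d-1} \equiv 0 \pmod q$, and consequently
$$\sum_{k=0}^{d-1} (a\omega^k + b\omega^{-k}) \equiv 0 \pmod q.$$
Writing $\theta_k = (a\omega^k + b\omega^{-k})/q$ and $z_k = e(\theta_k)$, the terms of $K(a,b,q,d) = \sum_k z_k$ are unit complex numbers whose product is $e(\sum_k \theta_k) = 1$; the characterization then gives $K(a,b,q,d) \in \h_d$.

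For part (b), I would define the continuous map $F : [0,1)^{d-1} \to \h_d$ by
$$F(\theta_0, \ldots, \theta_{d-2}) = e\Bigl(-\sum_{k=0}^{d-2}\theta_k\Bigr) + \sum_{k=0}^{d-2} e(\theta_k),$$
which is surjective onto $\h_d$ by the characterization. The computation from part (a) shows $K(a,b,q,d) = F(\{\theta_0\}, \ldots, \{\theta_{d-2}\})$, where $\{\cdot\}$ denotes the fractional part. Given $w \in \h_d$ and $\epsilon > 0$, the preimage $U = F^{-1}(B_\epsilon(w))$ is open (since $F$ is continuous) and non-empty (since $F$ is surjective with $w \in \h_d$), hence has positive Lebesgue measure. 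Because $d$ is prime, $\phi(d) = d - 1$, and Lemma \ref{Lemma:UniformDistribution} asserts that the sets $S_q$ indexed by $a \in \{0, 1, \ldots, q - 1\}$ are uniformly distributed modulo $1$ in $[0,1)^{d-1}$. Thus for $q$ sufficiently large, $S_q$ (reduced mod $1$) intersects $U$, and any witnessing $a$ furnishes $K(a,b,q,d) \in B_\epsilon(w)$.

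The main obstacle is justifying the characterization of $\h_d$; both directions are used, namely $\subseteq$ for part (a) and $\supseteq$ (i.e., surjectivity of $F$) for part (b). The standard proof proceeds via a Lagrange-multiplier analysis on the $(d-1)$-torus $\{(z_0, \ldots, z_{d-1}) : |z_k| = 1,\ \prod z_k = 1\}$, identifying the one-parameter family $z_1 = \cdots = z_{d-1} = e^{i\theta}$, $z_0 = e^{-(d-1)i\theta}$ as the critical orbit whose image traces the hypocycloid $(d-1)e^{i\theta} + e^{-(d-1)i\theta}$; a connectedness/degree argument then identifies the full image with the entire closed region $\h_d$ bounded by this curve. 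I would invoke this as a known fact, appearing, for instance, in analogous computations in the Duke--Garcia--Lutz treatment of Gaussian periods. Once this ingredient is in hand, the remainder is a direct application of Weyl's criterion, already packaged in Lemma \ref{Lemma:UniformDistribution}.
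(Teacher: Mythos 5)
Your proposal is correct and takes essentially the same route as the paper: part (a) rests on $1+\omega+\cdots+\omega^{d-1}\equiv 0 \pmod q$ (via $\omega\not\equiv 1 \pmod p$) together with the known fact that $\h_d$ is the image of $(z_1,\ldots,z_{d-1})\mapsto z_1+\cdots+z_{d-1}+(z_1\cdots z_{d-1})^{-1}$ on $\T^{d-1}$, and part (b) combines that surjectivity with Lemma \ref{Lemma:UniformDistribution}. The only cosmetic difference is that you detect $B_\epsilon(w)$ via an open preimage of positive measure, whereas the paper uses uniform continuity of $f$ to pass to a $\delta$-ball; both are immediate consequences of uniform distribution.
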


    \begin{proof}
        (a) Suppose that $q = p^{\alpha}$ is an odd prime power and $p \equiv 1 \pmod d$.
        Let $g$ be a primitive root modulo $q$ and define
        $u = g^{\phi(q)/d}$, so that $u$ has multiplicative order $d$ modulo $q$.  
        Since $p$ and $p-1$ are relatively prime, 
        $p-1 \nmid p^{\alpha-1}(\frac{p-1}{d})=\phi(q)/d$ and hence $u \not \equiv 1 \pmod{p}$.  Thus, $u-1$ is a unit modulo $q$, from which it follows that
        \begin{equation}\label{eq:cyclotomic}
            1+u+ \cdots +u^{d-1} \equiv 0 \pmod{q}. 
        \end{equation}
        
        Let $\T$ denote the unit circle in $\C$ and define $f:\T^{d-1}\to\C$ by
        \begin{equation}\label{eq:hypo}
            f(z_1,z_2,\ldots ,z_{d-1})=z_1+z_2+\cdots +z_{d-1}+\frac{1}{z_1z_2\cdots z_{d-1}};  
        \end{equation}
        it is well-known that the image of this function is the filled hypocycloid defined by \eqref{eq:Parameter}; 
        see \cite{cooper2007almost,gausscyclotomy,kaiser}.
        For $k=1,2,\ldots,d-1$, let
        \begin{equation*}
                \zeta_k=e\Big(\frac{au^{k-1}+bu^{-(k-1)}}{q} \Big)
        \end{equation*}
        and use \eqref{eq:cyclotomic} to conclude that
          \begin{align*}
            K(a,b,q,d) 
            &= \sum_{k=0}^{d-2} e \Big(\frac{au^k+bu^{-k}}{q} \Big) + e\Big(\frac{au^{d-1}+bu^{-(d-1)}}{q} \Big) \\
            &= \sum_{k=0}^{d-2} e \Big(\frac{au^k+bu^{-k}}{q} \Big) + e\Bigg(\frac{-a\sum_{k=0}^{d-2}u^k - b\sum_{k=0}^{d-2} u^{-k}}{q} \Bigg) \\
            &= \sum_{k=1}^{d-1}\zeta_k + \frac{1}{\zeta_1\zeta_2 \cdots \zeta_{d-1}}. 
          \end{align*}
        Thus $K(a,b,q,d)$ is contained in $\h_d$.  
        \medskip
        
        \noindent (b) Fix $\epsilon > 0$, $b \in \Z$, and let $B_{\epsilon}(w)$ be an open ball of radius $\epsilon$ centered at $w\in \h_d$.
        Let $f:\T^{d-1}\to\C$ denote the function defined by \eqref{eq:hypo} and let $\vec{z} \in \T^{d-1}$ satisfy $f(\vec{z}) = w$.
        The compactness of $\T^{d-1}$ ensures that $f$ is uniformly continuous, so there exists $\delta > 0$ so that
        $|f(\vec{z}) - f(\vec{x})| < \epsilon$ whenever $\| \vec{x} - \vec{z} \| < \delta$ (here we use the norm induced by the standard
        embedding of the torus $\T^{d-1}$ into $d$-dimensional Euclidean space).    
        Since $d$ is prime, $\phi(d)=d-1$ and hence
        Lemma \ref{Lemma:UniformDistribution} ensures that for each fixed $b$, the sets $S_q$ in $\R^{d-1}$ defined by \eqref{eq:Sq}
        are uniformly distributed mod $1$.  So for $q$ sufficiently large, there exists 
        \begin{equation*}
        \vec{x} = \Big(\frac{a+b}{q},\frac{a\omega_q+b\omega_q^{-1}}{q},\ldots,
            \frac{a\omega_q^{\phi(d)-1}+b\omega_q^{-\phi(d)+1}}{q}\Big) \in S_q,
        \end{equation*}
        so that $\| \vec{x} - \vec{z}\| < \delta$ holds.  Then $K(a,b,q,d) = f(\vec{x})$ belongs to $B_{\epsilon}(\vec{w})$. 
    \end{proof}

\section{Variants of hypocycloids}
    A glance at Figure \ref{fig:various1} suggests that hypocycloids are but one of many shapes
    that the values of generalized Kloosterman sums ``fill out.''  With additional work, a variety of
    results similar to Theorem \ref{thm:hypocycloid} can be obtained.  The following theorem is illustrated in Figure 
    \ref{fig:d=9}.

    \begin{figure}
        	\centering
        	\begin{subfigure}[b]{0.30\textwidth}
        		\includegraphics[width=\textwidth]{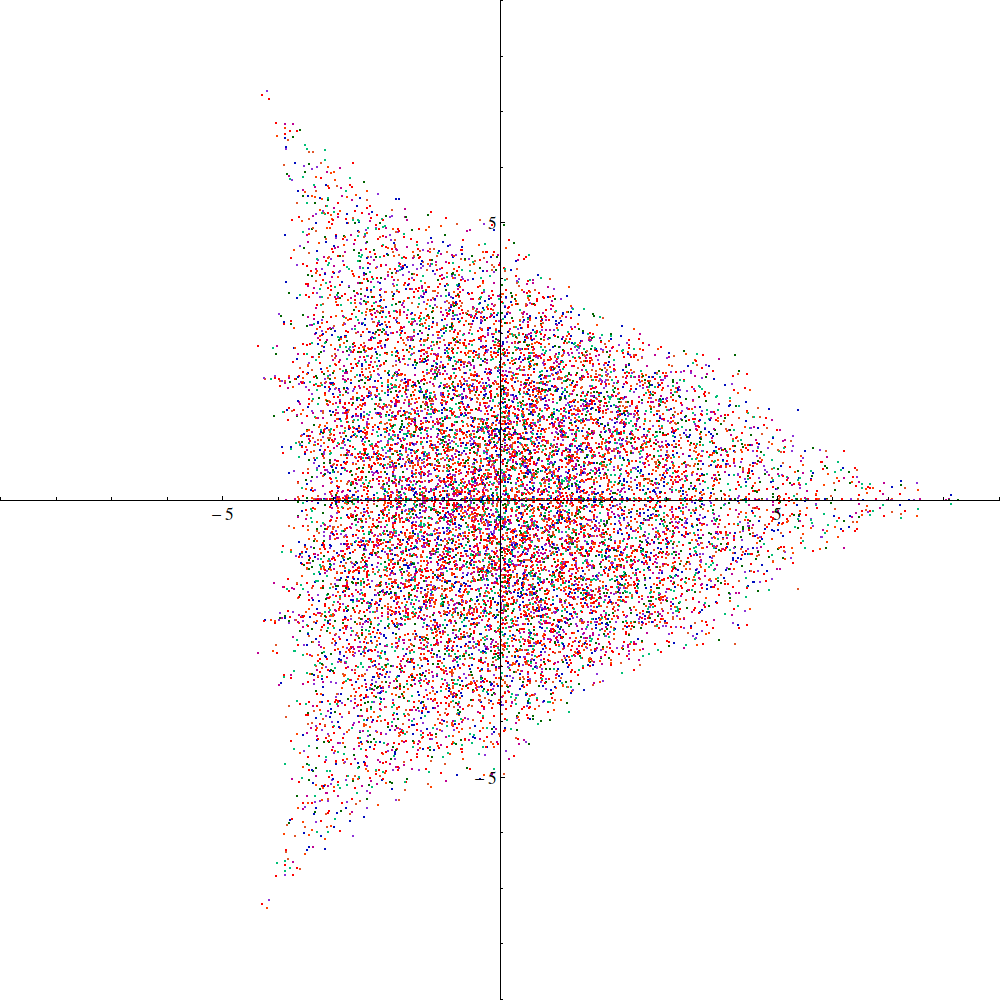}
        		\caption{$p=523$}
        	\end{subfigure}
        	\begin{subfigure}[b]{0.30\textwidth}
        		\includegraphics[width=\textwidth]{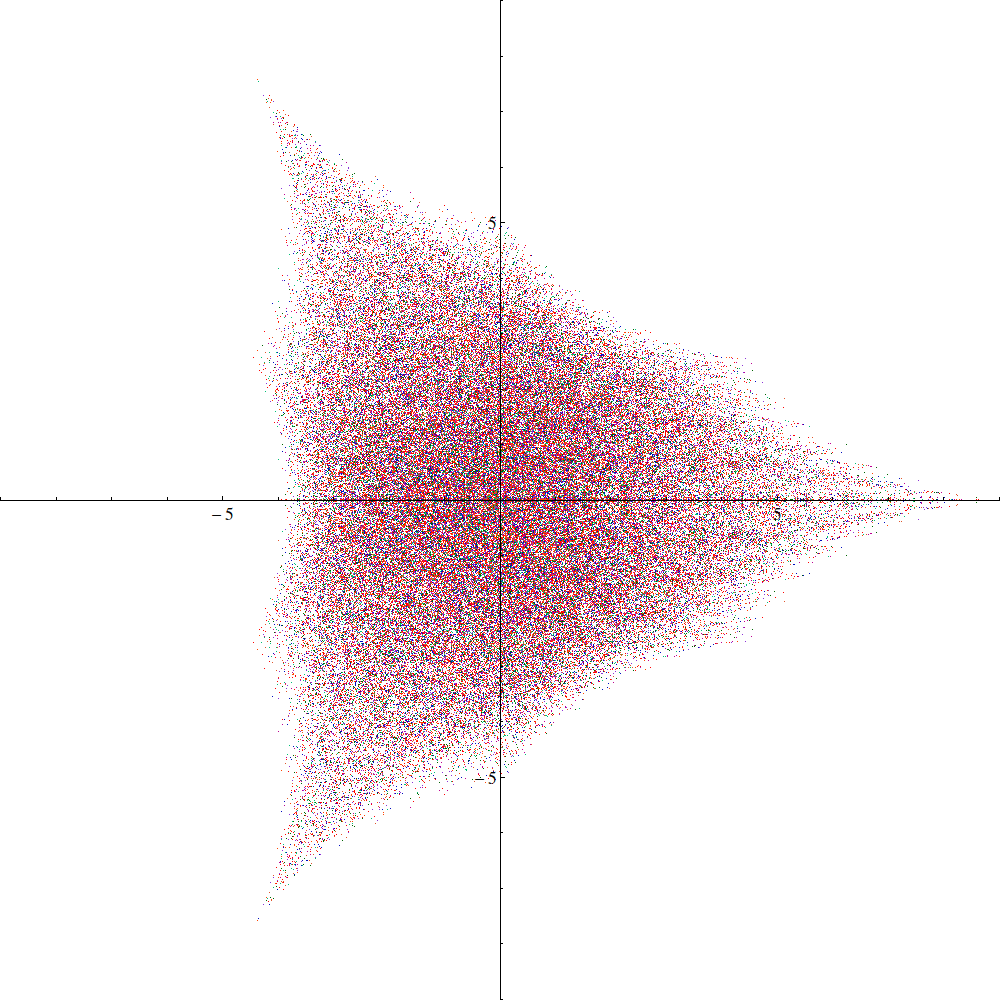}
        		\caption{$p=1621$}
        	\end{subfigure}
        	\begin{subfigure}[b]{0.30\textwidth}
        		\includegraphics[width=\textwidth]{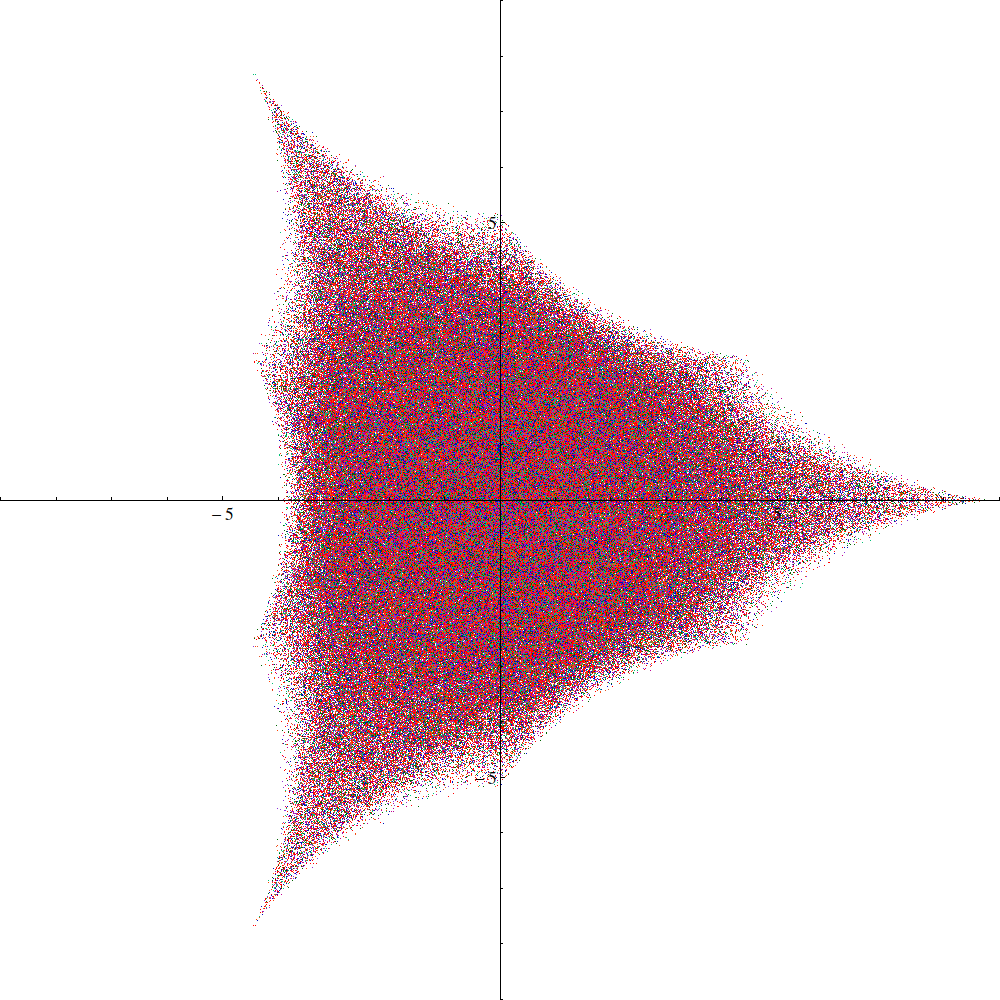}
        		\caption{$p=3511$}
        	\end{subfigure}
    	\caption{The values $\{ K(a,b,p,9) : 0 \leq a,b \leq p-1\}$ for several primes $p\equiv 1\pmod{9}$; see Theorem 
	\ref{theorem:tiled}.}
    	\label{fig:d=9}
    \end{figure}

    \begin{theorem}\label{theorem:tiled}
          Let $p \equiv 1 \pmod{9}$ be an odd prime.
          If $q=p^\alpha$ and $\alpha \geq 1$, then the values $\{K(a,b,q,d) : 0 \leq a,b \leq p-1\}$ are contained in the threefold sum
          \begin{equation}\label{eq:3sum}
                  \h_3 + \h_3 + \h_3 = \{ w_1+ w_2 + w_3 : w_1,w_2,w_3 \in \h_3\}
          \end{equation}
          of the filled deltoid $\h_3$.  Moreover,
          as $q \to \infty$, this shape is ``filled out'' in the sense of Theorem \ref{thm:hypocycloid}.
    \end{theorem}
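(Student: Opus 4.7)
The plan is to reduce the $d = 9$ case to the $d = 3$ case of Theorem \ref{thm:hypocycloid} by decomposing the order-$9$ cyclic subgroup of $(\Z/q\Z)^\times$ into three cosets of its order-$3$ subgroup. Since $p \equiv 1 \pmod 9$, we have $9 \mid p - 1 \mid \phi(q)$, so there is a primitive $9$th root of unity $u \in (\Z/q\Z)^\times$; then $u^3$ is automatically a primitive $3$rd root of unity modulo $q$. Partitioning the nine terms of $K(a,b,q,9) = \sum_{k=0}^{8} e\bigl(\tfrac{au^k + bu^{-k}}{q}\bigr)$ by the residue of $k$ modulo $3$, and factoring out $u,u^2$ inside the residue-$1$ and residue-$2$ cosets, yields
\begin{equation*}
    K(a,b,q,9) \;=\; K(a,b,q,3) \;+\; K(au,bu^{-1},q,3) \;+\; K(au^2,bu^{-2},q,3).
\end{equation*}
Each summand lies in $\h_3$ by Theorem \ref{thm:hypocycloid}(a) (applicable because $p \equiv 1 \pmod 3$), establishing the containment in \eqref{eq:3sum}.

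For the filling-out claim I would combine this decomposition with the explicit hypocycloid parametrization and with Lemma \ref{Lemma:UniformDistribution} taken at $d = 9$, where $\phi(9) = 6$. Recall from the proof of Theorem \ref{thm:hypocycloid}(a) that $K(c,d,q,3) = f(\zeta_1,\zeta_2)$, with $f(z_1,z_2) = z_1 + z_2 + (z_1 z_2)^{-1}$ surjecting $\T^2$ onto $\h_3$, where $\zeta_1 = e\bigl(\tfrac{c+d}{q}\bigr)$ and $\zeta_2 = e\bigl(\tfrac{c u^3 + d u^{-3}}{q}\bigr)$. Applying this to each of the three pieces above expresses
\begin{equation*}
    K(a,b,q,9) \;=\; F(\xi_0,\xi_1,\xi_2,\xi_3,\xi_4,\xi_5), \qquad \xi_k := e\Big(\tfrac{au^k + bu^{-k}}{q}\Big),
\end{equation*}
where $F : \T^6 \to \C$ is the continuous map $F(\xi_0,\ldots,\xi_5) = f(\xi_0,\xi_3) + f(\xi_1,\xi_4) + f(\xi_2,\xi_5)$. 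Crucially, $(\xi_0,\ldots,\xi_5)$ is the torus image of a generic element of the set $S_q$ in \eqref{eq:Sq}, so Lemma \ref{Lemma:UniformDistribution} tells us these points are equidistributed in $\T^6$ as $q \to \infty$.

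To approximate a target $w = w_1 + w_2 + w_3 \in \h_3 + \h_3 + \h_3$ to within $\epsilon$, I would pick preimages $(\eta_i^{(1)},\eta_i^{(2)}) \in f^{-1}(w_i)$ for $i = 1,2,3$ and assemble the target vector $\vec\eta = (\eta_1^{(1)},\eta_2^{(1)},\eta_3^{(1)},\eta_1^{(2)},\eta_2^{(2)},\eta_3^{(2)}) \in \T^6$, so that $F(\vec\eta) = w$. Uniform continuity of $F$ on the compact torus produces $\delta > 0$ with $\|\vec\xi - \vec\eta\| < \delta$ forcing $|F(\vec\xi) - w| < \epsilon$, and Lemma \ref{Lemma:UniformDistribution} supplies, for all sufficiently large $q$ and fixed $b$, some $a$ whose associated $\vec\xi$ is within $\delta$ of $\vec\eta$ modulo $1$. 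I do not expect any serious obstacle; the main work is purely organizational, namely checking that the six exponents $0,1,\ldots,5$ arising from the three deltoid pieces line up exactly with the $\phi(9) = 6$ coordinates supplied by Lemma \ref{Lemma:UniformDistribution}. No new number-theoretic input beyond the $d = 3$ hypocycloid theorem is required, and the same coset-decomposition scheme should extend, mutatis mutandis, to other prime power values of $d$.
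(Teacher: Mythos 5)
Your proof is correct and takes essentially the same route as the paper: your coset decomposition $K(a,b,q,9)=\sum_{i=0}^{2}K(au^i,bu^{-i},q,3)$ is exactly the paper's grouping of the nine terms into $\bigl(\zeta_1+\zeta_4+\tfrac{1}{\zeta_1\zeta_4}\bigr)+\bigl(\zeta_2+\zeta_5+\tfrac{1}{\zeta_2\zeta_5}\bigr)+\bigl(\zeta_3+\zeta_6+\tfrac{1}{\zeta_3\zeta_6}\bigr)$ obtained from $u^6+u^3+1\equiv 0\pmod{q}$, and the filling-out step via Lemma \ref{Lemma:UniformDistribution} with $\phi(9)=6$ matches the paper's (terser) argument. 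The only differences are presentational: you cite Theorem \ref{thm:hypocycloid}(a) for the containment rather than re-deriving deltoid membership, and you spell out the uniform-continuity details the paper omits.
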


    \begin{proof}
        Let $g$ be a primitive root modulo $q$ and define $u=g^{\phi(q)/9}$, so that $u$ has multiplicative order $9$ 
        modulo $q$.  Since $p$ and $p-1$ are relatively prime, $p-1 \nmid p^{\alpha-1}(\frac{p-1}{3})=\phi(q)/3$, so $u^3 \not\equiv 1 \pmod{p}$. Thus,
        \begin{equation*}
            u^6 + u^3 + 1 \equiv  (u^9 - 1)(u^3 - 1)^{-1} \equiv 0 \pmod{q},
        \end{equation*}
        so that $u^{6+j} \equiv - u^{3+j} - u^{j} \pmod{q}$ for $j=1,2,3$.  Along similar lines, we have
        $u^{-(6+j)} \equiv -u^{-(3+j)} - u^{-j} \pmod{q}$ for $j=1,2,3$.
        For $k=1,2,\ldots,6$, let
        \begin{equation*}
            \zeta_k = e\Big( \frac{au^k+bu^{-k}}{p} \Big)
        \end{equation*}
        and observe that
        \begin{align*}
            K(a,b,q,9)
            &=\sum_{k=1}^9 \eqsmall{au^k+bu^{-k}} \\
            &= \sum_{k=1}^6 \eqsmall{au^k+bu^{-k}} + \sum_{j=1}^3 \eqsmall{a(-u^{3+j}-u^j)+b(-u^{6+j}-u^{3+j})} \\
            &= \zeta_1 + \zeta_2+\cdots + \zeta_6 + \frac{1}{\zeta_1 \zeta_4} + \frac{1}{\zeta_2 \zeta_5} + \frac{1}{\zeta_3 \zeta_6} \\
            &= \Big( \zeta_1 + \zeta_4 + \frac{1}{\zeta_1 \zeta_4} \Big) +
            \Big( \zeta_2 + \zeta_5 + \frac{1}{\zeta_2 \zeta_5} \Big) +
            \Big( \zeta_3 + \zeta_6 + \frac{1}{\zeta_3 \zeta_6} \Big).
        \end{align*}
        Thus, $K(a,b,q,9)$ belongs to $\h_3 + \h_3 + \h_3$.  
        Since $\phi(9)=6$, Lemma \ref{Lemma:UniformDistribution} ensures that the sets
        \begin{equation*}
            T_q = \left\{\left(\eqsmall{au+bu^{-1}}, \ldots, \eqsmall{au^6+bu^{-6}}\right) : 0\leq a \leq q-1\right\}
        \end{equation*}
        are uniformly distributed modulo $1$ for any fixed $b$.  Thus, the values
        $\{K(a,b,q,d) : 0 \leq a,b \leq p-1\}$ ``fill out'' $\h_3 + \h_3 + \h_3$ as $q \to \infty$.
    \end{proof}

\section{Of squares and Sali\'{e} sums}\label{sec:h=2}
    The preceding subsections concerned the asymptotic behavior of generalized Kloosterman sums
    $K(a,b,p^{\alpha},d)$, in which $d|(p-1)$ and $p^{\alpha}$ tends to infinity.  Here 
    we turn the tables somewhat and consider the sums $K(a,b,p,\frac{p-1}{d})$ for $d = 2^n$.  In general, we take 
    $d$ to be the largest power of two that divides $p-1$; otherwise the cyclic subgroup of $\Z/p\Z$ of order $\frac{p-1}{d}$
    has even order.  This forces $K(a,b,p,\frac{p-1}{d})$ to be real valued, which is uninteresting from our perspective.

    For a fixed odd prime $p$, 
    \begin{equation*}
    T(a,b,p)=\sum_{u=1}^{p-1}\leg{u}{p}\ep{au+bu^{-1}}
    \end{equation*}
    is called a \emph{Sali\'{e} sum};  here $(\frac{u}{p})$ denotes the Legendre symbol.
    Although they bear a close resemblance to classical Kloosterman sums, 
    the values of Sali\'e sums can be explicitly determined \cite{iwaniec2004analytic}.
        If $p$ is an odd prime and $(a,p)=(b,p)=1$, then
        \begin{equation}\label{eq:SalieExplicit}
	T(a,b,p)=
	\begin{cases}
	  2\tau_p\cos\Big(\dfrac{2\pi k}{p} \Big) & \text{if } \leg{a}{p}=\leg{b}{p}=1,\\
	  -2\tau_p\cos\Big(\dfrac{2\pi k}{p} \Big) & \text{if } \leg{a}{p}=\leg{b}{p}=-1, \\
	  0 & \text{otherwise},
	\end{cases}
      \end{equation}
    where $k$ is the square root of $4ab$ and
    \begin{equation*}
        \tau_n=\begin{cases} \sqrt{n} & \mbox{if } n \equiv 1 \pmod{4},\\ i\sqrt{n} & \mbox{if } n \equiv 3 \pmod{4}. \end{cases}
    \end{equation*}

    \begin{figure}
        	\centering
        	\begin{subfigure}[b]{0.30\textwidth}
        		\includegraphics[width=\textwidth]{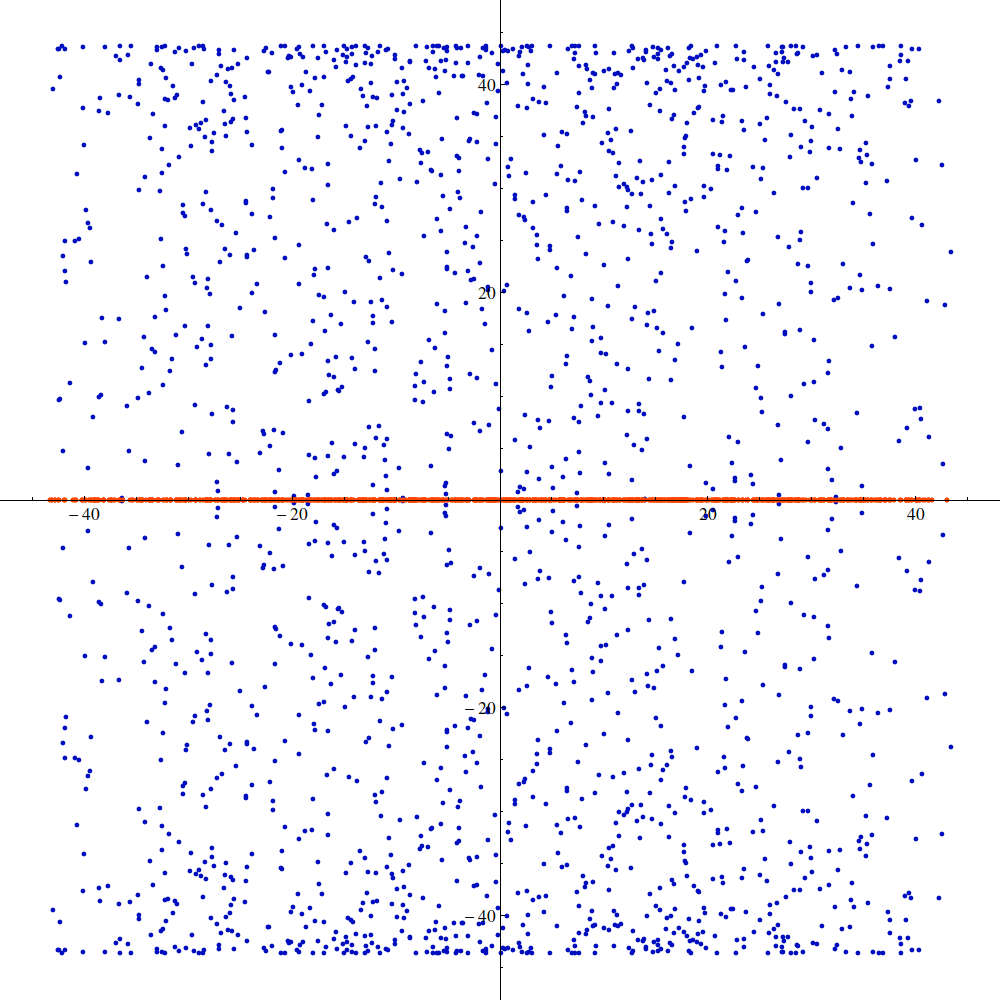}
        		\caption{$p=1907$}
        	\end{subfigure}
        	\begin{subfigure}[b]{0.30\textwidth}
        		\includegraphics[width=\textwidth]{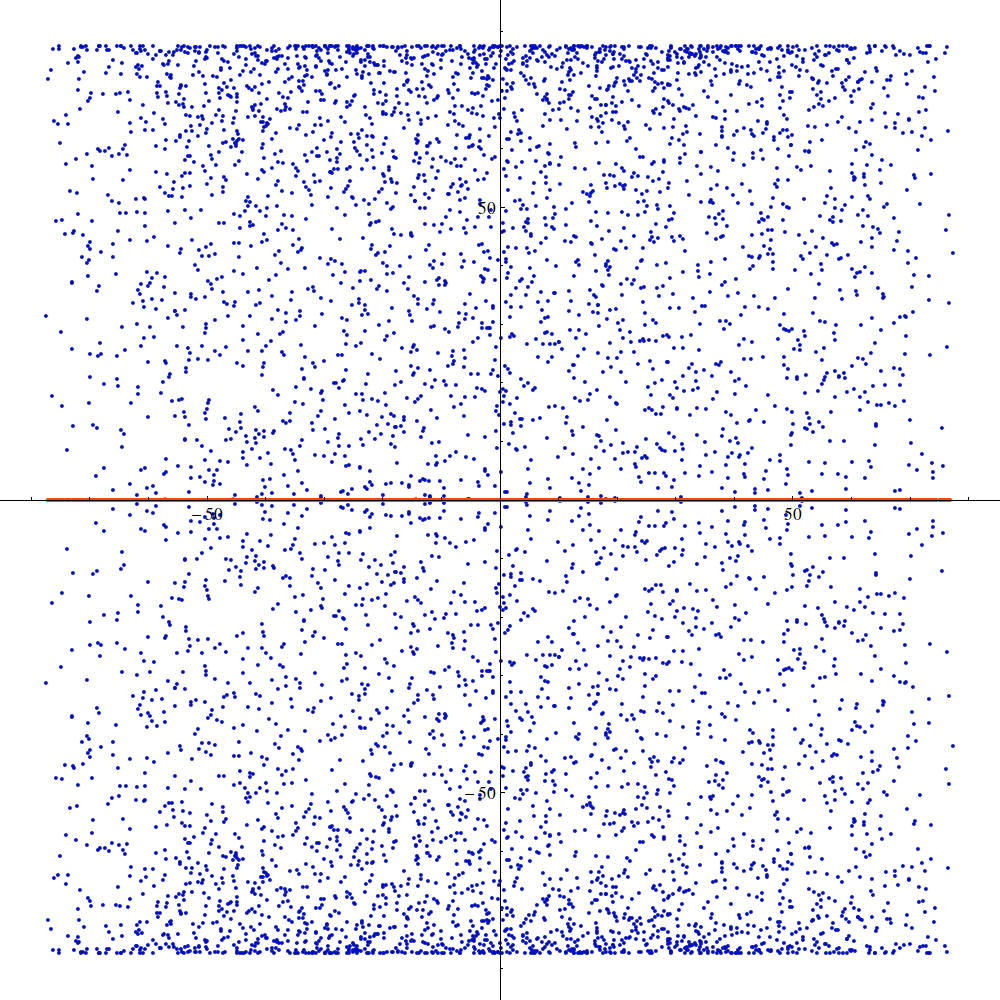}
        		\caption{$p=6007$}
        	\end{subfigure}
        	\begin{subfigure}[b]{0.30\textwidth}
        		\includegraphics[width=\textwidth]{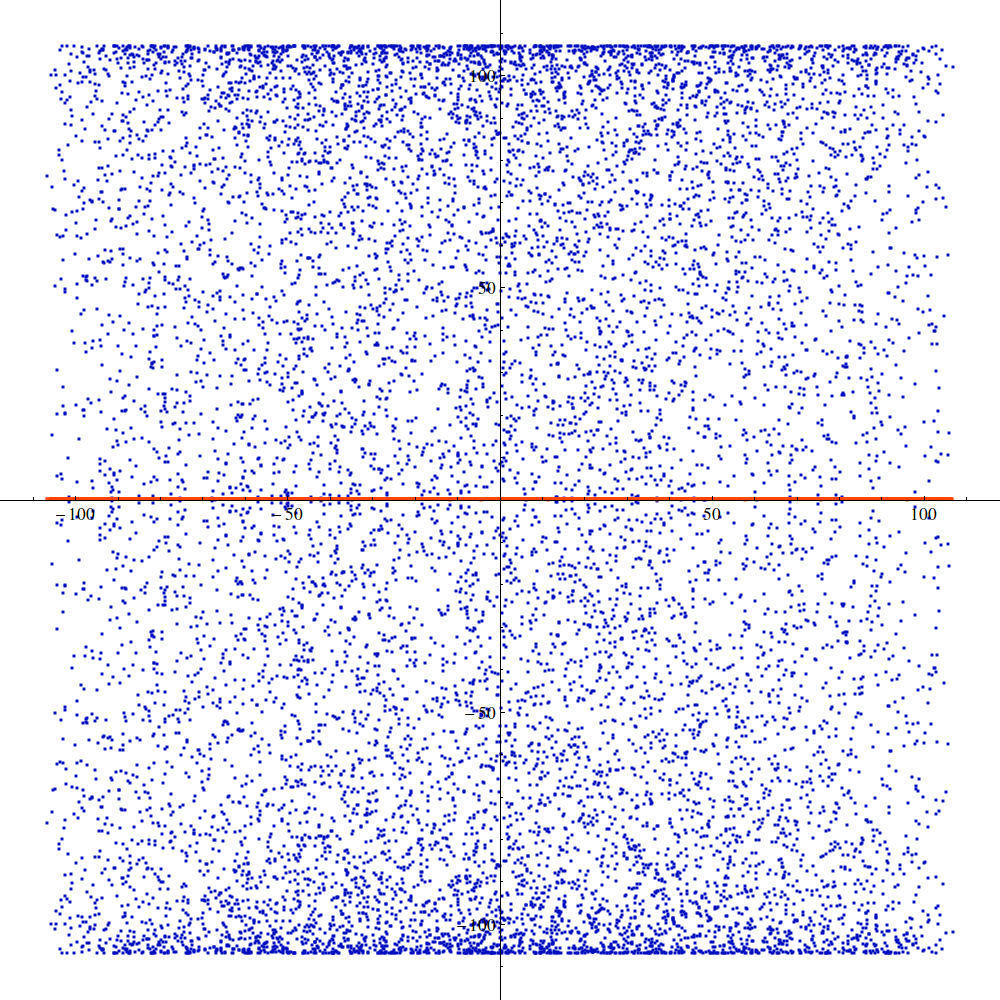}
        		\caption{$p=11447$}
        	\end{subfigure}
    	\caption{Plots of $K(a,b,p,\frac{p-1}{2})$ for primes $p \equiv 3 \pmod{4}$ and $p \nmid ab$.  The images are contained
    	in the square with vertices $(\pm \sqrt{p}, \pm i \sqrt{p})$; see Theorem \ref{h2}.
    	Here $K(a,b,p,\frac{p-1}{2})$ is blue if $(\frac{ab}{p})=1$ and red if $(\frac{ab}{p})=-1$.
	The appearance of the horizontal red line segment is explained by \eqref{eq:h2} and 
	the third condition in \eqref{eq:SalieExplicit}.  The higher density of points near the top and bottom of the blue square
	is due to the cosine term in \eqref{eq:SalieExplicit}.}
    	\label{fig:square}
    \end{figure}

    The following result explains the phenomenon observed in Figure \ref{fig:square}.

    \begin{theorem}\label{h2}
    	Let $p \equiv 3 \pmod{4}$ be an odd prime.  If $p \nmid ab$, then
	\begin{equation}\label{eq:ReIm}
		|\Re K(a,b,p,\tfrac{p-1}{2})| \leq \frac{\sqrt{p}}{2}, \qquad
		|\Im K(a,b,p,\tfrac{p-1}{2})| \leq \frac{\sqrt{p}}{2}.
	\end{equation}
	If $p|ab$, then 
	\begin{equation*}
	  K(a,b,p,\frac{p-1}{2}) = 
	  \begin{cases}
	    \frac{1}{2}\big( ( \frac{b}{p})\tau_p - 1\big) & \text{if $a=0$}, \\
	    \frac{p-1}{2} & \text{if $a=b=0$}.
	  \end{cases}
	\end{equation*}
    \end{theorem}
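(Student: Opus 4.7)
The plan is to reduce $K\bigl(a,b,p,\tfrac{p-1}{2}\bigr)$ to a combination of a classical Kloosterman sum and a Salié sum, and then use the explicit formula \eqref{eq:SalieExplicit} together with the Weil bound. Since $p$ is an odd prime, the unique subgroup of $(\Z/p\Z)^{\times}$ of order $\tfrac{p-1}{2}$ is the set of nonzero squares modulo $p$, whose indicator is $\tfrac{1}{2}\bigl(1+\leg{u}{p}\bigr)$. Substituting this into the definition \eqref{eq:Kabmg} gives
\begin{equation*}
K\bigl(a,b,p,\tfrac{p-1}{2}\bigr) \;=\; \tfrac{1}{2}\,\mathrm{Kl}(a,b,p) \,+\, \tfrac{1}{2}\,T(a,b,p),
\end{equation*}
where $\mathrm{Kl}(a,b,p)=\sum_{u=1}^{p-1}\ep{au+bu^{-1}}$ is the classical Kloosterman sum.

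I would then exploit the hypothesis $p\equiv 3\pmod 4$, which forces $\tau_p=i\sqrt{p}$. By \eqref{eq:SalieExplicit} this makes $T(a,b,p)$ purely imaginary (or zero) whenever $p\nmid ab$, while $\mathrm{Kl}(a,b,p)$ is always real. The decomposition therefore splits cleanly into
\begin{equation*}
\Re K\bigl(a,b,p,\tfrac{p-1}{2}\bigr) \;=\; \tfrac{1}{2}\,\mathrm{Kl}(a,b,p), \qquad \Im K\bigl(a,b,p,\tfrac{p-1}{2}\bigr) \;=\; \tfrac{1}{2}\,\Im T(a,b,p).
\end{equation*}
The bound \eqref{eq:ReIm} on the real part then follows from the Weil bound $|\mathrm{Kl}(a,b,p)|\leq 2\sqrt{p}$ imported from \cite{iwaniec2004analytic}, and the bound on the imaginary part is immediate from \eqref{eq:SalieExplicit} combined with $|\cos|\leq 1$.

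For the degenerate cases with $p\mid ab$: when $a=b=0$ every summand of $K$ equals $1$, so the sum equals $\tfrac{p-1}{2}$ on the nose. For $a=0$ and $p\nmid b$, I would use the fact that $u\mapsto u^{-1}$ permutes the subgroup of squares, so $\sum_{u\text{ sq.}}\ep{bu^{-1}}=\sum_{v\text{ sq.}}\ep{bv}$, then insert the Legendre indicator $\tfrac{1}{2}\bigl(1+\leg{v}{p}\bigr)$ and apply the two standard evaluations $\sum_{v=1}^{p-1}\ep{bv}=-1$ and $\sum_{v=1}^{p-1}\leg{v}{p}\ep{bv}=\leg{b}{p}\tau_p$ to obtain $\tfrac{1}{2}\bigl(\leg{b}{p}\tau_p-1\bigr)$. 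The case $b=0$, $p\nmid a$ follows by symmetry since the subgroup of squares is closed under inversion.

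The only substantial input to the argument is the Weil bound on $\mathrm{Kl}(a,b,p)$, which is used as a black box; the rest is a coordinated use of the Legendre indicator, the Salié evaluation \eqref{eq:SalieExplicit} furnished immediately before the statement, and the standard quadratic Gauss sum. The key conceptual point, and not really an obstacle, is recognizing that $p\equiv 3\pmod 4$ is precisely the congruence condition that makes $\tau_p$ purely imaginary, which is what permits the clean separation of $\Re K$ and $\Im K$ into distinct exponential-sum quantities.
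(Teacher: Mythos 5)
Your decomposition via the quadratic-residue indicator $\tfrac12\bigl(1+\leg{u}{p}\bigr)$ is exactly the paper's identity \eqref{eq:h2}, and the rest of your argument (Weil bound for the real part, the Sali\'e evaluation for the imaginary part, Gauss sums for the degenerate cases) is the same as the paper's proof --- indeed you supply the $p\mid ab$ computation that the paper omits as ``straightforward.'' One caveat that is not a defect of your proof: both your argument and the paper's yield the bound $\sqrt{p}$ rather than the stated $\sqrt{p}/2$, since $\tfrac12\cdot 2\sqrt{p}=\sqrt{p}$; this agrees with the caption of Figure \ref{fig:square}, so the constant in \eqref{eq:ReIm} appears to be a typo in the theorem statement, and you should state the bound you actually prove.
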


    \begin{proof}
        Since 
        \begin{align*}
            T(a,b,p) &=\sum_{u^{\frac{p-1}{2}}=1} \epsmall{au+bu^{-1}} - \sum_{u^{\frac{p-1}{2}}=-1}\epsmall{au+bu^{-1}}, \\
                K(a,b,p) &=\sum_{u^{\frac{p-1}{2}}=1} \epsmall{au+bu^{-1}} + \sum_{u^{\frac{p-1}{2}}=-1}\epsmall{au+bu^{-1}},
        \end{align*}
        it follows that 
        \begin{equation}
            K(a,b,p,\tfrac{p-1}{2})=\tfrac{1}{2}\big(T(a,b,p)+K(a,b,p)\big);
            \label{eq:h2}
        \end{equation} 
        we thank Bill Duke for pointing this out to us.
        Since $|T(a,b,p)|\leq 2\sqrt{p}|\cos(\theta)|\leq 2\sqrt{p}$, we have $\frac{1}{2}|T(a,b,p)| \leq \sqrt{p}$.
        On the other hand,  
        the Weil bound for classical Kloosterman sums ensures that $\frac{1}{2}|K(a,b,p)| \leq \sqrt{p}$ if $p \nmid ab$ \cite{weil}.        
        Since $p \equiv 3 \pmod{4}$, the Sali\'{e} sums $T(a,b,p)$ are purely imaginary (or zero), which yields 
        \eqref{eq:ReIm}.  The evaluation of $K(a,b,p,\tfrac{p-1}{2})$ when $p|ab$ is straightforward and omitted.
    \end{proof}

    \begin{figure}
    	\centering
        	\begin{subfigure}[b]{0.45\textwidth}
    		\includegraphics[width=\textwidth]{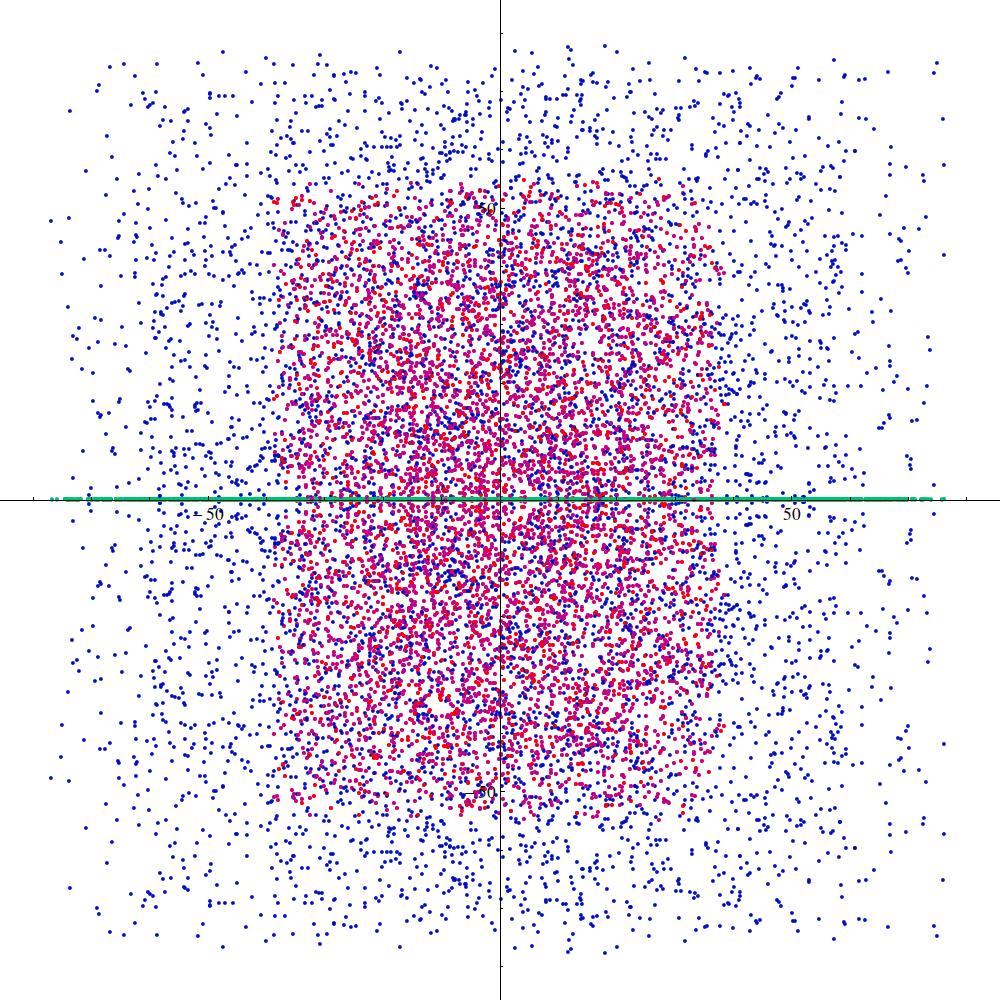}
    		\caption{$p=6053$}
    	\end{subfigure}
    	\quad
        	\begin{subfigure}[b]{0.45\textwidth}
    		\includegraphics[width=\textwidth]{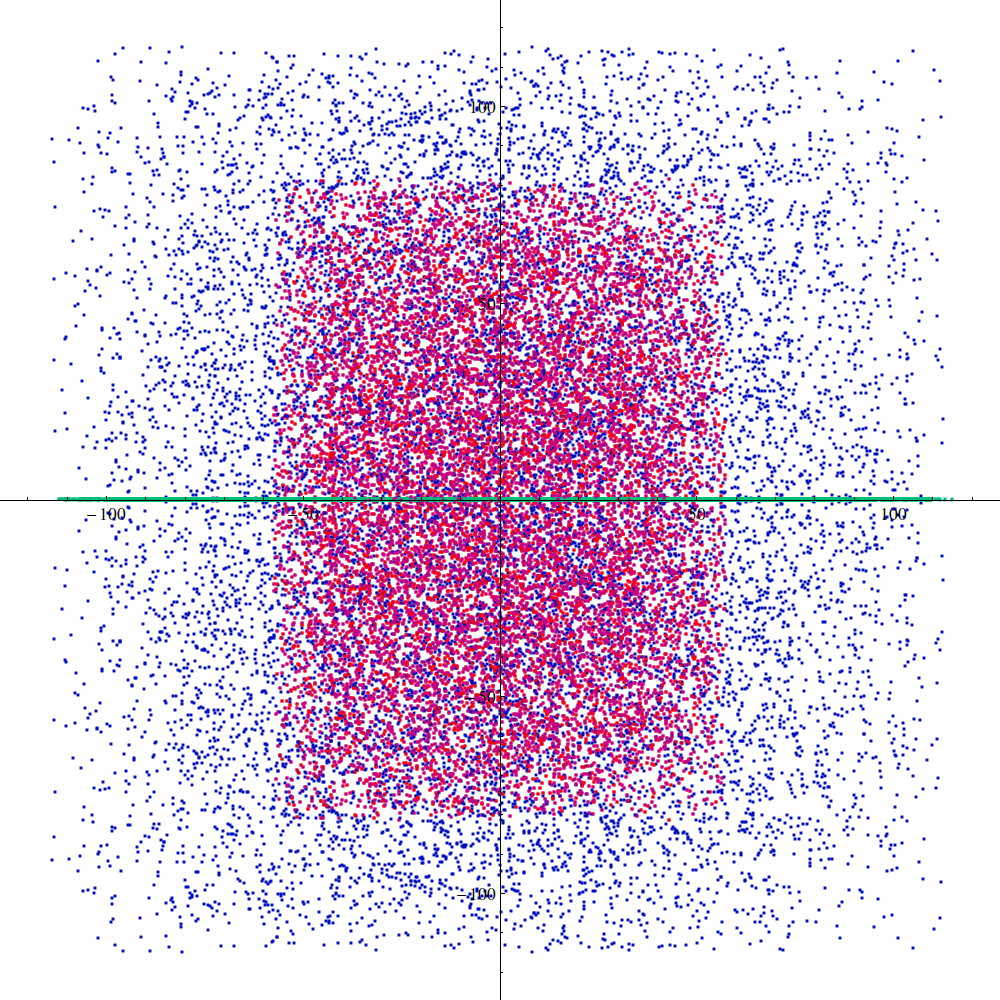}
    		\caption{$p=13309$}
    	\end{subfigure}
    
    	\caption{Generalized Kloosterman sums $K(a,b,p,\frac{p-1}{4})$ with $p\equiv 5\pmod{8}$, $1 \leq a,b \leq p-1$. 
	Write $a=g^r,b=g^s$, where $g$ is a primitive root modulo $p$. $K(a,b,p,\frac{p-1}{4})$ 
	is colored blue, red, green, or purple if $r-s \equiv 0,1,2,$ or $3 \pmod{4}$ respectively.}
    	\label{fig:h=4}
    \end{figure}
    
\section{Boxes in boxes}

    The images for $d=\frac{p-1}{4}$ resemble a rectangle inside a larger square; see Figure \ref{fig:h=4}.  This
    differs significantly from the $d=\frac{p-1}{2}$ case. 
    The following lemma and theorem partially explain the ``box-in-a-box'' behavior of $d=\frac{p-1}{4}$ plots.
    
    \begin{lemma}\label{hreal}
    	Let $p$ be an odd prime of the form $p=2^nd+1$, with $d$ odd and $n \geq 1$. Then $K(a,b,p,\frac{p-1}{2^{n-1}})=2\Re  K(a,b,p,\frac{p-1}{2^n})$. 
    \end{lemma}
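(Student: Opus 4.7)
The plan is to exploit the subgroup structure of $(\Z/p\Z)^\times$: enlarging the cyclic subgroup of order $\tfrac{p-1}{2^n}$ to the one of order $\tfrac{p-1}{2^{n-1}}$ amounts to adjoining $-1$, which will let me split the longer sum into a shorter sum plus its complex conjugate.

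First I would set $H = \tfrac{p-1}{2^n}$. Since $p - 1 = 2^n d$ with $d$ odd, $H = d$ is odd. Let $\Lambda_H \subset \Lambda_{2H}$ denote the unique cyclic subgroups of $(\Z/p\Z)^\times$ of orders $H$ and $2H$, respectively. The key observation is that $-1 \in \Lambda_{2H} \setminus \Lambda_H$: the element $-1$ has multiplicative order $2$, which divides $2H$ but not the odd integer $H$. Since $\Lambda_H$ has index $2$ in $\Lambda_{2H}$, the non-trivial coset must be $-\Lambda_H$, giving the disjoint union
\begin{equation*}
\Lambda_{2H} = \Lambda_H \sqcup (-\Lambda_H).
\end{equation*}

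Next I would split the defining sum of $K(a,b,p,\tfrac{p-1}{2^{n-1}})$ along this coset decomposition and reindex the second piece by $u \mapsto -u$, which simply negates the exponent:
\begin{equation*}
K\big(a,b,p,\tfrac{p-1}{2^{n-1}}\big) = \sum_{u \in \Lambda_H} \ep{au + bu^{-1}} + \sum_{u \in \Lambda_H} \ep{-au - bu^{-1}}.
\end{equation*}
The first sum is by definition $K(a,b,p,\tfrac{p-1}{2^n})$, and since $e(-x) = \overline{e(x)}$ the second sum is its complex conjugate. Adding a complex number to its conjugate yields twice its real part, which is exactly the claimed identity.

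I do not anticipate a substantive obstacle: the lemma is a structural observation rather than a computation. The only delicate point is identifying the non-trivial coset of $\Lambda_H$ in $\Lambda_{2H}$ as $-\Lambda_H$, which uses the hypothesis that $d$ is odd in an essential way—if $d$ were even, $-1$ would already lie in $\Lambda_H$ and the two halves of the sum would coincide rather than form a conjugate pair, so the factor of $2\Re$ would not appear.
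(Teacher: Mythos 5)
Your proof is correct, and it takes a more direct route than the paper's. You decompose the subgroup of order $2d$ as the disjoint union of the subgroup of order $d$ and its coset $-\Lambda_H$ (using that $-1$ has order $2$, which divides $2d$ but not the odd number $d$), observe that $a(-u)+b(-u)^{-1}=-(au+bu^{-1})$ so the substitution $u\mapsto -u$ conjugates each summand, and conclude that the second half of the long sum is exactly $\overline{K(a,b,p,\tfrac{p-1}{2^n})}$. The paper instead argues indirectly: it first notes that $K(a,b,p,\tfrac{p-1}{2^{n-1}})$ is real-valued (its subgroup has even order, hence contains $-1$, hence the sum is invariant under conjugation), and then shows by a separate conjugation computation that $2K(a,b,p,\tfrac{p-1}{2^{n}})-K(a,b,p,\tfrac{p-1}{2^{n-1}})$ is purely imaginary, from which the identity follows by taking real parts. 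Both arguments hinge on the same mechanism --- negating $u$ conjugates the exponential --- but your coset decomposition packages it in a single step and removes the need for the auxiliary realness and pure-imaginarity claims. You also correctly pinpoint where the oddness of $d$ is used: it guarantees $-1\notin\Lambda_H$, so the two halves form a conjugate pair rather than coinciding.
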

    
    \begin{proof}
    	Note that $p=2^nd+1=2^{n-1}(2d)+1$, so $K(a,b,p,\frac{p-1}{2^{n-1}})$ is real-valued. Therefore, it suffices to show that $2K(a,b,p,\frac{p-1}{2^{n}})-K(a,b,p,\frac{p-1}{2^{n-1}})$ is purely imaginary. 
    	\begin{align*}
    	&\overline{2K(a,b,p,\tfrac{p-1}{2^{n}})-K(a,b,p,\tfrac{p-1}{2^{n-1}})} \\
	&\qquad =2\sum_{u^d=1} \epsmall{-au-bu^{-1}} - \sum_{v^{2d}=1}\epsmall{-av-bv^{-1}}	\\
    	&\qquad =\sum_{u^d=1} \epsmall{-au-bu^{-1}} - \sum_{v^d=-1} \epsmall{-av-bv^{-1}} \\
    	&\qquad =\sum_{u^d=(-1)^d} \epsmall{au+bu^{-1}} - \sum_{v^d=-(-1)^d} \epsmall{av+bv^{-1}} \\
    	&\qquad =\sum_{u^{2d}=1} \epsmall{au+bu^{-1}} - 2\sum_{v^d=-(-1)^d} \epsmall{av+bv^{-1}} .
    	\end{align*}
    	Since $d$ is odd, $-(-1)^d=1$, so the above term simplifies to 
	$K(a,b,p,\frac{p-1}{2^{n-1}})-2K(a,b,p,\frac{p-1}{2^{n}})$. Then $2K(a,b,p,\frac{p-1}{2^{n}})-K(a,b,p,\frac{p-1}{2^{n-1}})$ is purely imaginary.
    \end{proof}

    \begin{theorem}\label{thm:h4_bound}
        Let $p \equiv 1 \pmod{4}$ be prime, $1 \leq a,b \leq p-1$. 
        Let $g$ be a primitive root of $(\Z/p\Z)^\times$ and write $a=g^r,b=g^s$. Then 
        \begin{equation*}
            |\Re  K(a,b,p,\tfrac{p-1}{4})| \le \begin{cases} 
            \sqrt{p} &\mbox{if } r-s \equiv 0,2 \pmod{4}, \\[3pt] \frac{\sqrt{p}}{2} &\mbox{if } r - s \equiv 1,3 \pmod{4}. \end{cases}
        \end{equation*}
        Furthermore, if $r-s \equiv 2 \pmod{4}$, then $\Im K(a,b,p,\frac{p-1}{4})=0$.
    \end{theorem}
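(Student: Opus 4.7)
My plan is to split the theorem into two halves: the bound on $\Re K(a,b,p,\tfrac{p-1}{4})$, and the vanishing of $\Im K(a,b,p,\tfrac{p-1}{4})$ under $r-s \equiv 2 \pmod 4$. I work under the assumption $p \equiv 5 \pmod 8$ (consistent with Figure \ref{fig:h=4}), so that $(p-1)/4$ is odd and Lemma \ref{hreal} applies with $n=2$. When $p\equiv 1\pmod 8$ the subgroup of order $(p-1)/4$ already contains $-1$, so $K(a,b,p,\tfrac{p-1}{4})$ is automatically real and the imaginary-part statement is trivial.

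For the real-part bound, Lemma \ref{hreal} immediately yields
$$\Re K(a,b,p,\tfrac{p-1}{4}) = \tfrac{1}{2}K(a,b,p,\tfrac{p-1}{2}),$$
and equation \eqref{eq:h2} rewrites this as $\tfrac{1}{4}\bigl(T(a,b,p) + K(a,b,p)\bigr)$. The Weil bound gives $|K(a,b,p)| \leq 2\sqrt p$, while \eqref{eq:SalieExplicit} gives $|T(a,b,p)| \leq 2\sqrt p$ with $T(a,b,p)=0$ precisely when $\legsmall{a}{p} \neq \legsmall{b}{p}$. Since $a=g^r, b=g^s$ and $g$ is a primitive root, this Legendre condition translates to $r+s$ being odd, which (as $r+s$ and $r-s$ share parity) is the same as $r-s \equiv 1,3 \pmod 4$. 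In that regime the Sali\'e term vanishes and $|\Re K| \leq \sqrt p/2$; otherwise both terms are present and the triangle inequality gives $|\Re K| \leq \sqrt p$.

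For the imaginary part I plan to exploit two symmetries of $K(a,b,p,H)$, where $H=\langle g^4\rangle$ is the subgroup of order $(p-1)/4$. The substitution $u\mapsto u^{-1}$, which bijects $H$ with itself, yields $K(a,b,p,H) = K(b,a,p,H)$. For any $c\in H$, the substitution $u\mapsto cu$ yields $K(\alpha,\beta,p,H) = K(\alpha c,\beta c^{-1},p,H)$. Applying the second identity with $(\alpha,\beta)=(-a,-b)$ and $c=-b/a$ gives $K(-a,-b,p,H)=K(b,a,p,H)$ \emph{provided} $-b/a\in H$. Writing $-b/a = g^{s-r+(p-1)/2}$ and using $(p-1)/2\equiv 2 \pmod 4$, membership in $\langle g^4\rangle$ reduces to the single congruence $r-s\equiv 2 \pmod 4$. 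Chaining the two symmetries with the conjugation identity $\overline{K(a,b,p,H)}=K(-a,-b,p,H)$ then forces $K(a,b,p,H)$ to equal its own complex conjugate, so $\Im K(a,b,p,\tfrac{p-1}{4})=0$.

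The main obstacle is spotting the correct change of variable: the observation that $(b,a)$ lies in the $H$-orbit of $(-a,-b)$ exactly when $-b/a\in H$, combined with the parity computation $(p-1)/2\equiv 2 \pmod 4$, is what pins the imaginary-part vanishing to the specific residue class $r-s\equiv 2 \pmod 4$. Everything else is a clean assembly of Lemma \ref{hreal}, the Weil bound, and the Sali\'e evaluation \eqref{eq:SalieExplicit}, with the case split on the parity of $r-s$ neatly matching the Legendre condition that kills $T(a,b,p)$.
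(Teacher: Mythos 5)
Your argument for the real-part bound is the same as the paper's: combine Lemma \ref{hreal} with the identity \eqref{eq:h2} to get $\Re K(a,b,p,\tfrac{p-1}{4})=\tfrac14\big(T(a,b,p)+K(a,b,p)\big)$, then apply the Weil bound to $K(a,b,p)$ and the explicit evaluation \eqref{eq:SalieExplicit} to $T(a,b,p)$, noting that the Sali\'e sum vanishes exactly when $r-s$ is odd. For the imaginary part you take a genuinely cleaner route: you chain $\overline{K(a,b,p,H)}=K(-a,-b,p,H)=K(b,a,p,H)=K(a,b,p,H)$ with $H=\langle g^4\rangle$, where the middle step is the scaling $u\mapsto cu$ with $c=-b/a$, and the key observation is that $-b/a=g^{\,s-r+(p-1)/2}$ lies in $H$ precisely when $r-s\equiv 2\pmod 4$ (since $(p-1)/2\equiv 2\pmod 4$). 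The paper instead uses Lemma \ref{hreal} to write $\Im K(a,b,p,\tfrac{p-1}{4})$ as $\tfrac{i}{2}\big(K(a,b,p,\tfrac{p-1}{4})-K(ag^2,bg^{-2},p,\tfrac{p-1}{4})\big)$ and kills the difference using the same two symmetries after a case split on $r,s\pmod 4$; your version uses identical ingredients but avoids both the decomposition and the case split. One point worth flagging for both proofs: the reduction $\Re K(a,b,p,\tfrac{p-1}{4})=\tfrac12 K(a,b,p,\tfrac{p-1}{2})$ needs $n=2$ in Lemma \ref{hreal}, i.e.\ $p\equiv 5\pmod 8$. You state this restriction explicitly and dispose of the imaginary-part claim for $p\equiv 1\pmod 8$ (where $-1\in H$ forces $K$ to be real), but your write-up leaves the real-part bound unproven in that residue class; the paper, which asserts the theorem for all $p\equiv 1\pmod 4$, has exactly the same gap, only silently.
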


    \begin{proof}
    By Theorem \ref{h2}, we know that $$K(a,b,p,\tfrac{p-1}{2}) = \frac{T(a,b,p) + K(a,b,p)}{2}.$$ Using Lemma \ref{hreal}, 
    we write $$\Re  K(a,b,p,\tfrac{p-1}{4}) = \frac{T(a,b,p) + K(a,b,p)}{4}.$$ The first half of this fraction is simply a traditional 
    Kloosterman sum, which is real valued and bounded by $\pm 2\sqrt{p}$. Since $p \equiv 1\pmod{4}$, 
    $T(a,b,p)$ is also real valued and bounded by $\pm 2\sqrt{p}$. Note that $4ab$ is a quadratic residue modulo 
    $p$ if and only if $r+s$, and therefore $r-s$, is even. Thus, when $r - s \equiv 0\pmod{4}$ or $r-s \equiv 2 \pmod{4}$, we can say
    \begin{align*}
        |\Re  K(a,b,p,\tfrac{p-1}{4})| &=\frac{K(a,b,p) + T(a,b,p)}{4}\le\frac{2\sqrt{p}+2\sqrt{p}}{4} = \sqrt{p}.
    \end{align*}
    Alternatively, if $r - s \equiv 1\pmod{4}$ or $r-s \equiv 3\pmod{4}$, then $T(a,b,p) = 0$ and
    \begin{align*}
        |\Re  K(a,b,p,\tfrac{p-1}{4})| &=\frac{K(a,b,p)}{4}\le\frac{2\sqrt{p}}{4} = \frac{\sqrt{p}}{2}. 
    \end{align*} 
    Now suppose that $r-s \equiv 2 \pmod{4}$. First, we rewrite $\Im K(a,b,p,\frac{p-1}{4})$ using Lemma \ref{hreal}:
      \begin{align*}
            \Im K(a,b,p,\tfrac{p-1}{4})&=iK(a,b,p,\tfrac{p-1}{4})-i\Re  K(a,b,p,\tfrac{p-1}{4}) \\
            &=iK(a,b,p,\tfrac{p-1}{4})-\frac{i}{2}K(a,b,p,\tfrac{p-1}{2}) \\ 
            &=i\sum_{k=1}^{d}\epsmall{ag^{4k}+bg^{-4k}}-\frac{i}{2}\sum_{k=1}^{2d}\epsmall{ag^{2k}+bg^{-2k}}  \\
            &=\frac{i}{2}\sum_{k=1}^{d}\epsmall{ag^{4k}+bg^{-4k}} - \frac{i}{2}\sum_{k=1}^d\epsmall{ag^{4k+2}+bg^{-4k-2}}  \\
            &=\frac{i}{2}\Big(K(a,b,p,\tfrac{p-1}{4})-K(ag^2,bg^{-2},p,\tfrac{p-1}{4}) \Big). 
      \end{align*}
      Since $r-s \equiv 2 \pmod{4}$, we have $r \equiv 0 \pmod{4}$ and $s \equiv 2 \pmod{4}$, 
      or $r \equiv 1 \pmod{4}$ and $s \equiv 3 \pmod{4}$, up to permutation. Suppose the first case holds. 
      Then we can write $a=g^{4j},b=g^{4k+2}$ for some integers $j,k$. It is easy to check that 
      $K(a,b,p,d)=K(av,bv^{-1},p,d)$ for all $v^d \equiv 1 \pmod{p}$. Using this fact, we obtain
      \begin{align*}
            &K(a,b,p,\tfrac{p-1}{4})-K(ag^2,bg^{-2},p,\tfrac{p-1}{4})\\
            &\qquad =K(g^{4j},g^{4k+2},p,\tfrac{p-1}{4})-K(g^{4j+2},g^{4k},p,\tfrac{p-1}{4})\\
            &\qquad =K(g^{4k},g^{4j+2},p,\tfrac{p-1}{4})-K(g^{4j+2},g^{4k},p,\tfrac{p-1}{4}) = 0.
      \end{align*}
      The second case is similar.
    \end{proof}

    It is apparent from Figure \ref{fig:h=4} that different bounds are obeyed by $K(g^r,g^s,p,\frac{p-1}{4})$ 
    depending on the value of $r-s \pmod{4}$. Theorem \ref{thm:h4_bound} confirms this observation for the real 
    part of the plot.  As seen in Figure \ref{fig:h=4}, the bound of 
    $\Im K(a,b,p,\frac{p-1}{4})$ seems dependent on the value of $r-s$ modulo $p$. We have established the imaginary 
    bound for one value of $r-s$ modulo $p$. We have the following conjecture.
    
    \medskip
    \noindent\textbf{Conjecture}:
    Let $p=4d+1$ be a prime, $g$ a primitive root modulo $p$. Then 
    \begin{equation*}
        |\Im K(g^r,g^s,p,\tfrac{p-1}{4})| \leq \begin{cases} \frac{\sqrt{2p}}{2} 
        &\mbox{if } r-s \equiv 1,3 \pmod{4}, \\ \sqrt{p} &\mbox{if } r-s \equiv 0 \pmod{4}. \end{cases}
    \end{equation*}

\section{Sporadic spiders}\label{sec:bugs}

    \begin{figure}
    	\centering
        	\begin{subfigure}[b]{0.30\textwidth}
    		\includegraphics[width=\textwidth]{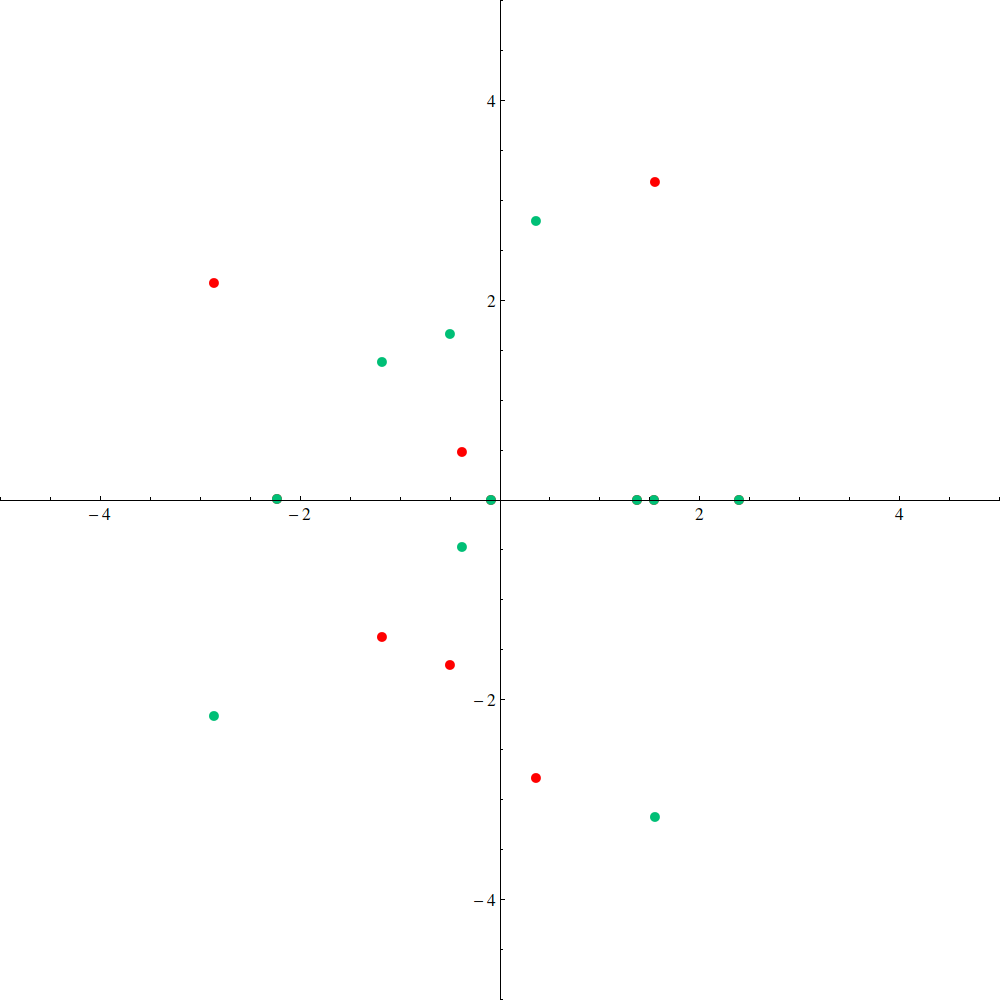}
    		\caption{$m=11$, $d=5$}
    	\end{subfigure}
    	\quad
        	\begin{subfigure}[b]{0.30\textwidth}
    		\includegraphics[width=\textwidth]{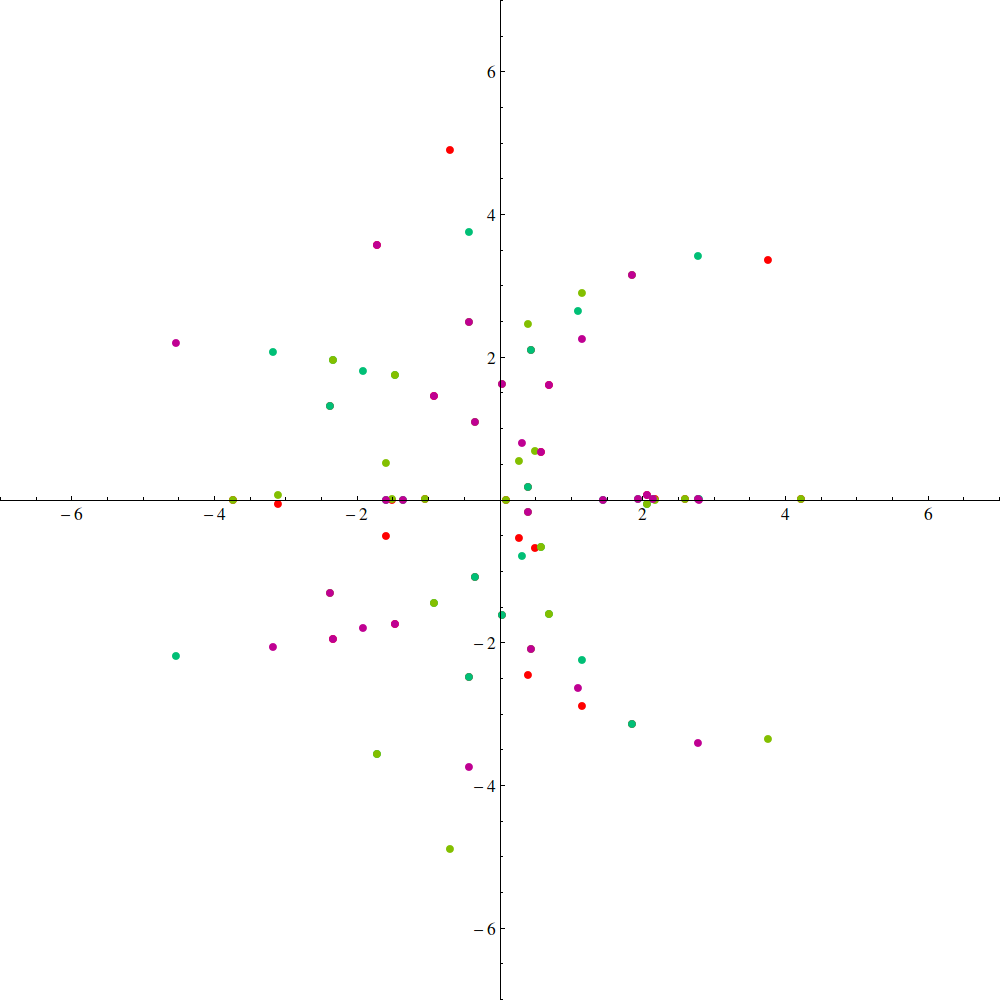}
    		\caption{$m=29$, $d=7$}
    	\end{subfigure}
    	\quad
        	\begin{subfigure}[b]{0.30\textwidth}
    		\includegraphics[width=\textwidth]{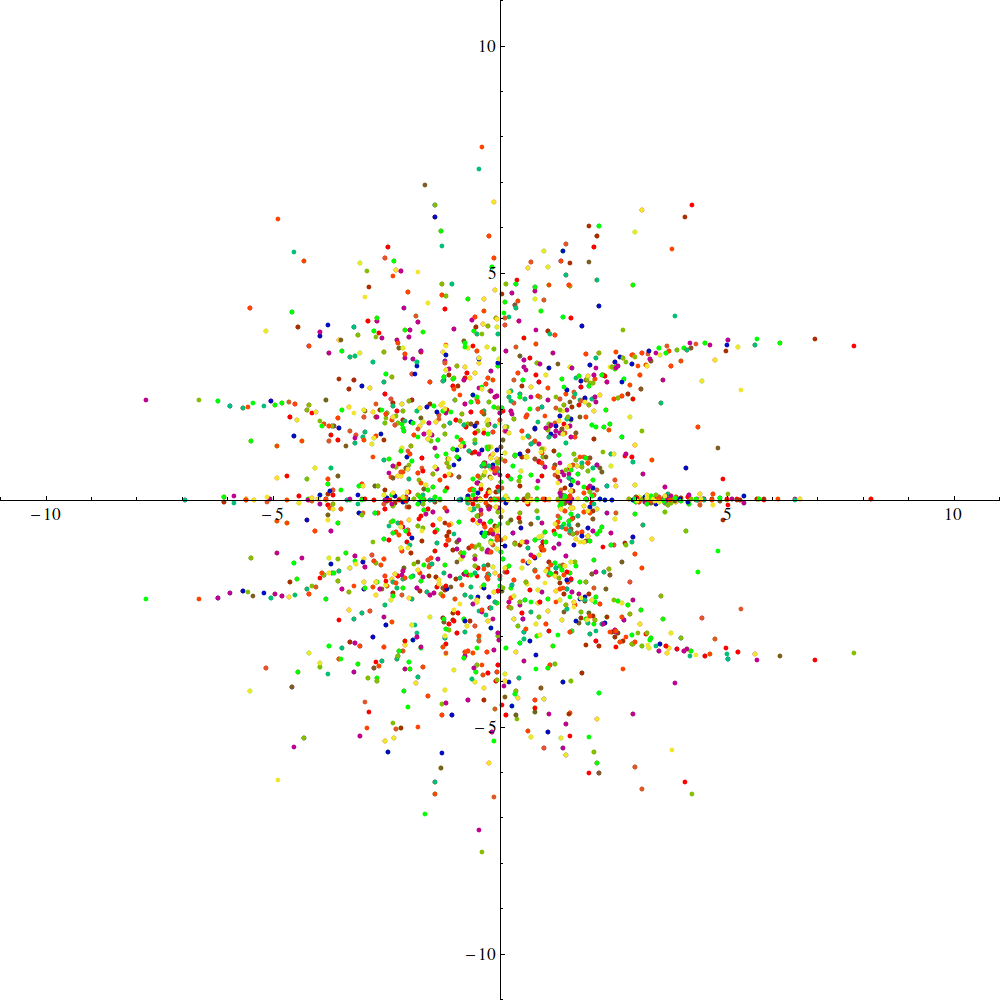}
    		\caption{$m=199$, $d=11$}
    	\end{subfigure}
    
        	\begin{subfigure}[b]{0.30\textwidth}
    		\includegraphics[width=\textwidth]{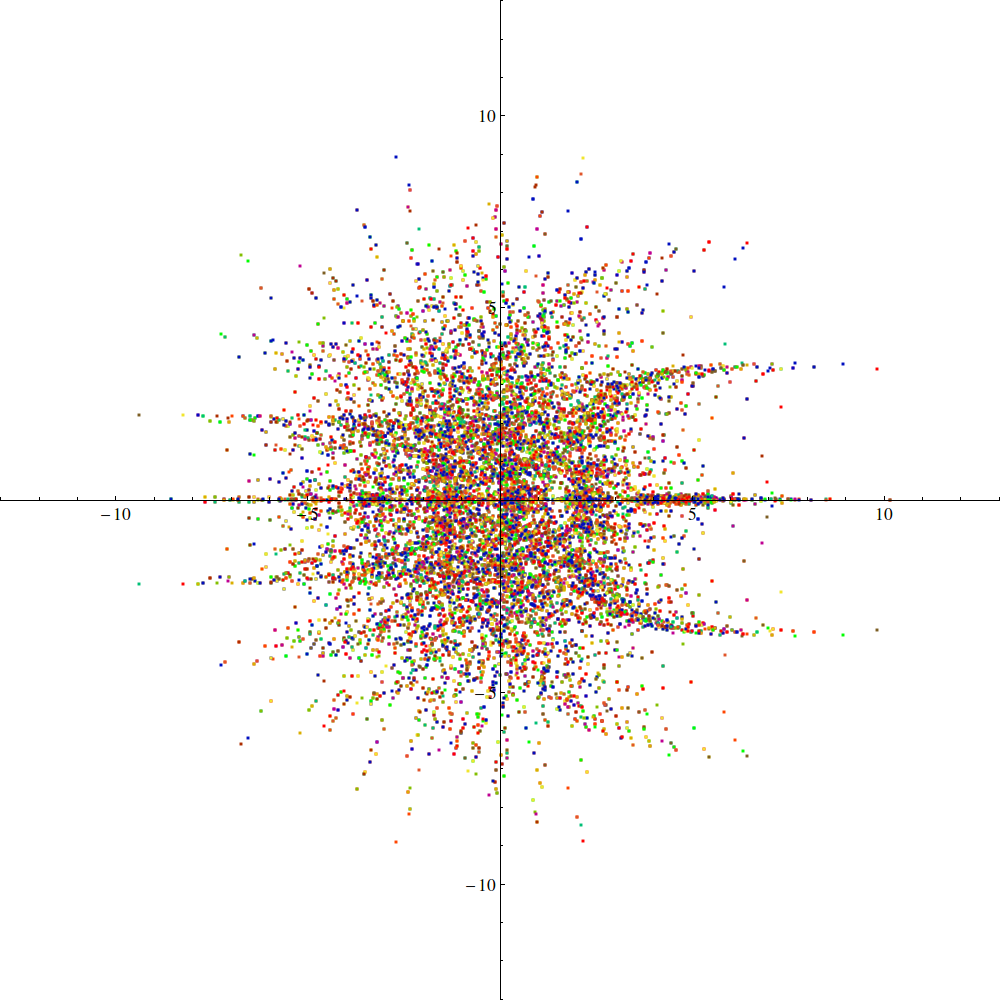}
    		\caption{$m=521$, $d=13$}
    	\end{subfigure}
            \quad
        	\begin{subfigure}[b]{0.30\textwidth}
    		\includegraphics[width=\textwidth]{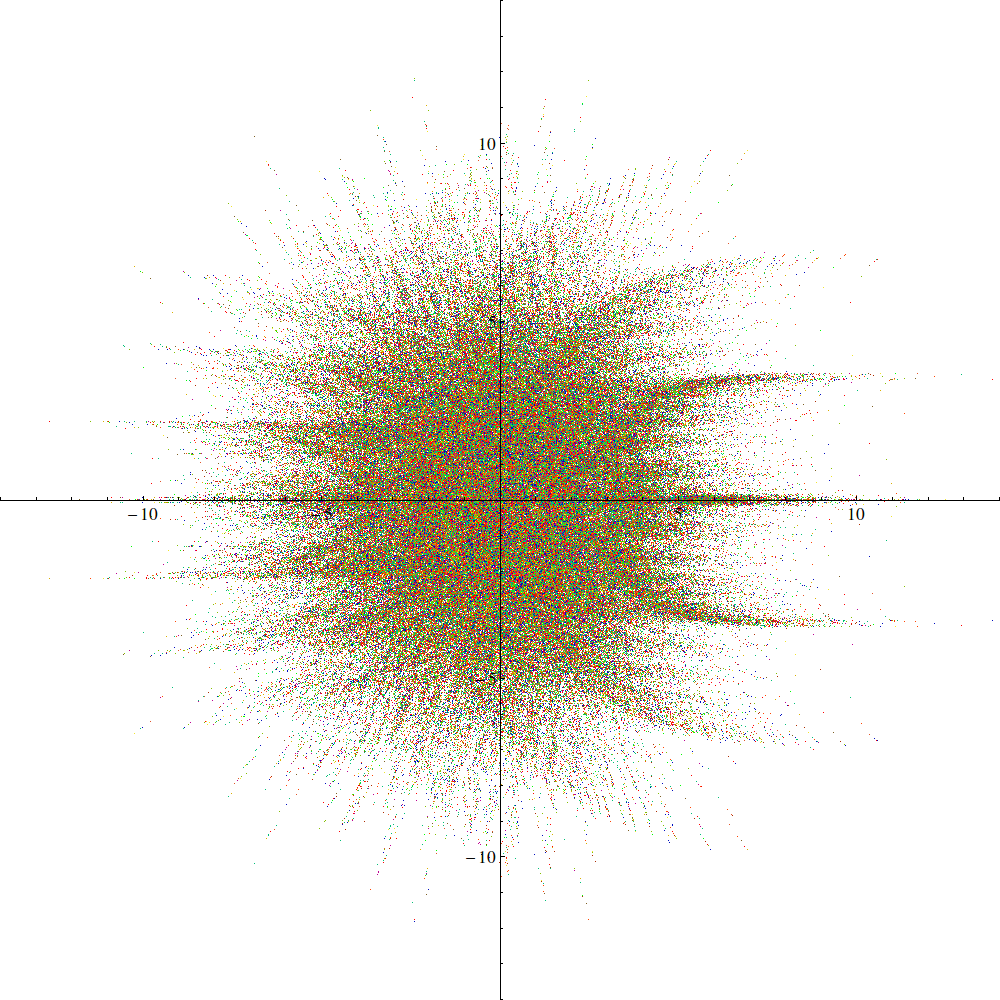}
    		\caption{$m=3571$, $d=17$}
    	\end{subfigure}
    	\quad
        	\begin{subfigure}[b]{0.30\textwidth}
    		\includegraphics[width=\textwidth]{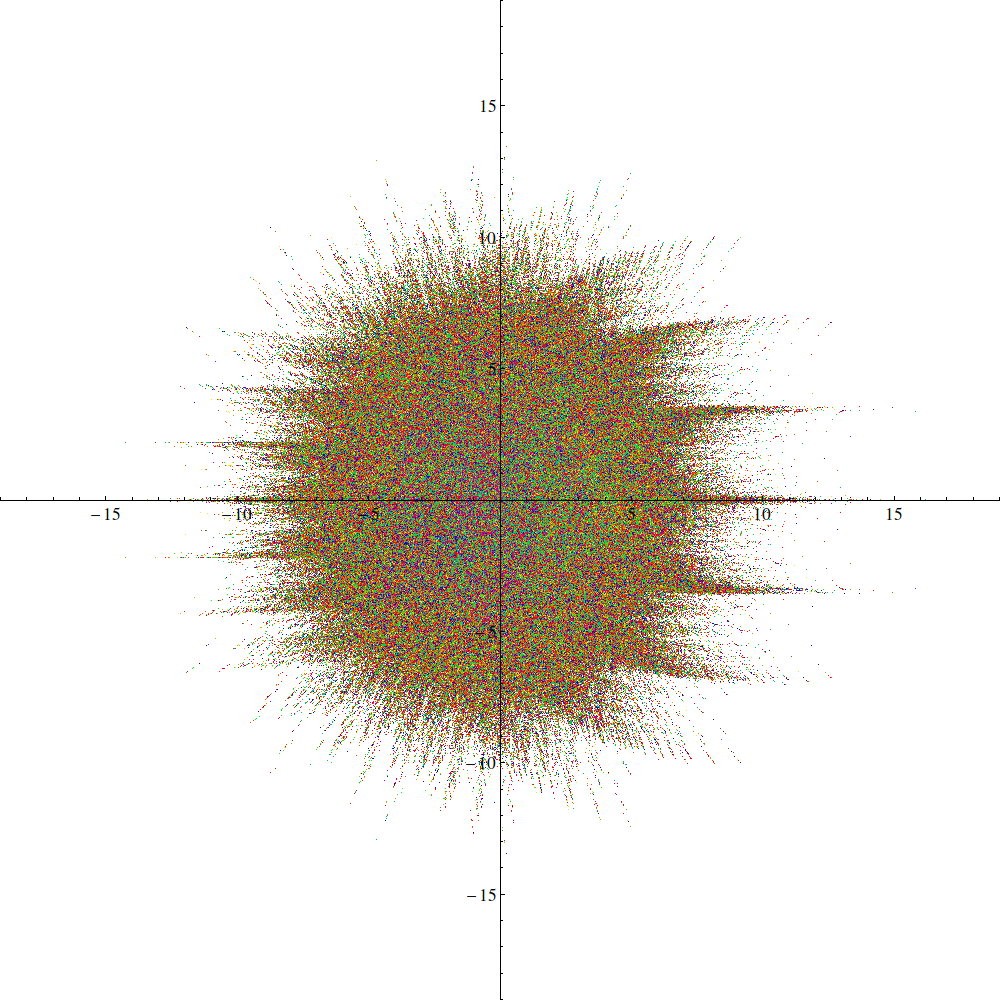}
    		\caption{$m=9349$, $d=19$}
    	\end{subfigure}

    	\caption{Generalized Kloosterman sums $K(a,b,m,\Lambda)$ for some $\Lambda \leq (\Z/m\Z)^\times$ of order $d$.
    	These images resemble fat spiders with a horrifically increasing number of legs.}
    	\label{fig:bugs}
    \end{figure}

    We conclude this note with an investigation of a peculiar and intriguing phenomenon.
    Numerical evidence suggests that for a fixed odd prime $d$, the spider-like image depicted in Figure \ref{fig:bugs}
    appears abruptly for only one specific modulus.
    Figure \ref{fig:fleeting} illustrates the swift coming and going of the ephemeral spider.
    
    \begin{figure}
      \centering
      \begin{subfigure}[b]{0.30\textwidth}
    	\includegraphics[width=\textwidth]{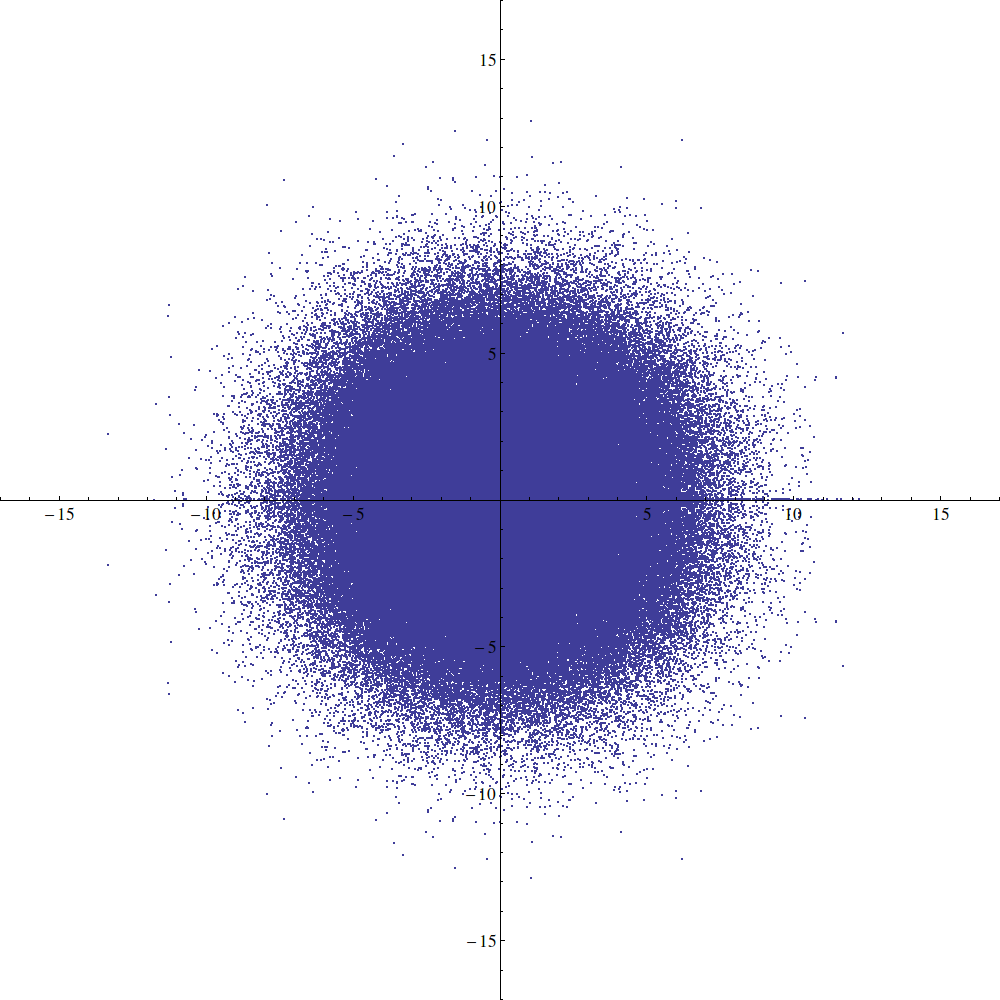}
    	\caption{$p=3469$, $d=17$}
      \end{subfigure}
      \begin{subfigure}[b]{0.30\textwidth}
    	\includegraphics[width=\textwidth]{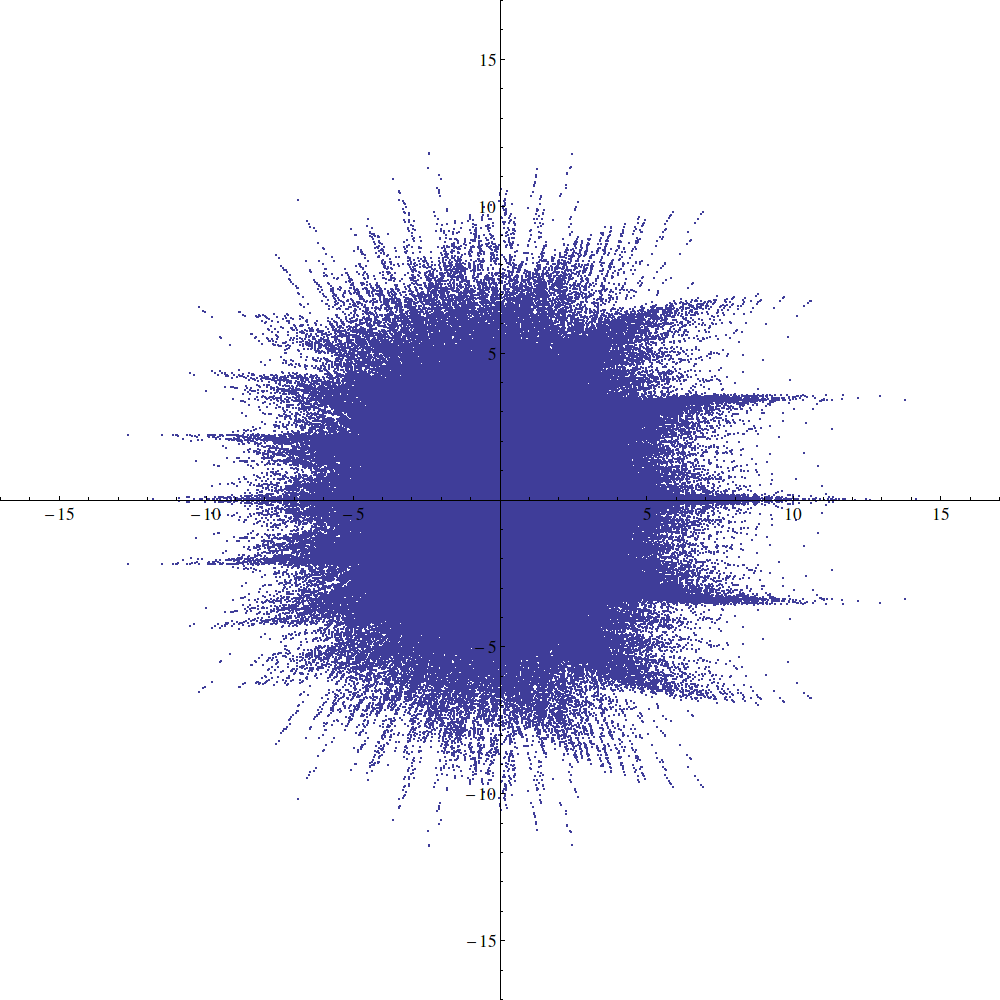}
    	\caption{$p=3571$, $d=17$}
      \end{subfigure}
      \begin{subfigure}[b]{0.30\textwidth}
    	\includegraphics[width=\textwidth]{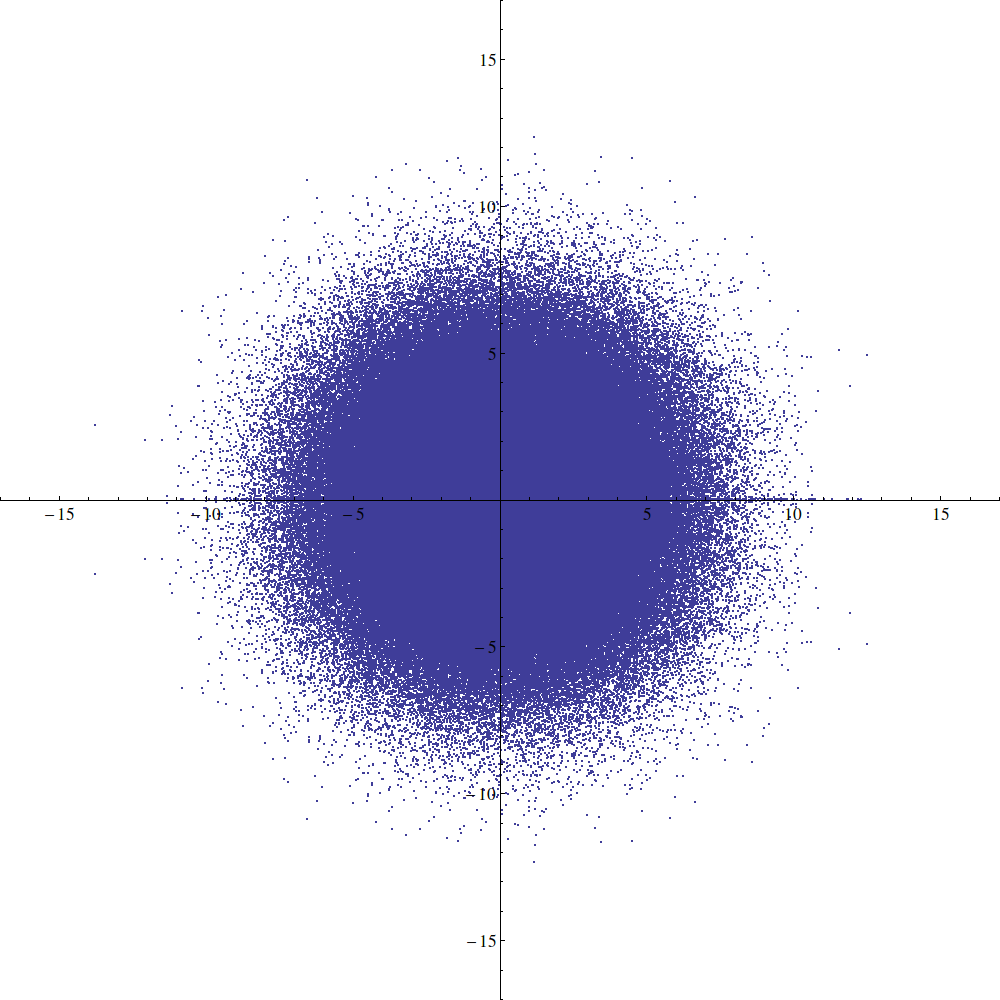}
    	\caption{$p=3673$, $d=17$}
      \end{subfigure}
    
      \caption{$3469,3571,3673$ are three consecutive primes congruent to $1 \pmod{17}$. 
      Theorem \ref{thm:hypocycloid} tells us that the values of the corresponding 
      generalized Kloosterman sums are on their way to filling out $\h_{17}$. 
      The appearance of the spider at $p=3571$ is as surprising as it is fleeting.}
      \label{fig:fleeting}
    \end{figure}

    The moduli that generate the spiders are all of the form $L_{p(n)}$, 
    where $p(n)$ is the $n$th prime and $L_k$ is the $k$th Lucas number; see Table \ref{table:bugs}.
    Recall that the Lucas numbers (sequence 
    \href{https://oeis.org/A000032}{\texttt{A000032}} in the OEIS) are defined by the initial conditions $L_0=2$, $L_1=1$,
     and the recurrence relation $L_n=L_{n-1}+L_{n-2}$ for $n>1$.        
    It is not immediately clear that our pattern can continue indefinitely since for prime $d$ we require
    $(\Z/L_d\Z)^\times$ to have a subgroup of order $d$.  This is addressed by Theorem \ref{Theorem:Lucas}
    below.  We first need the following lemma.

    \begin{table}
        \begin{center}
        \begin{tabular}{|c|c|c|c|c|c|c|c|c|c|}
        	\hline
          $n$ & 3 & 4 & 5 & 6 & 7 & 8 & 9 & 10 & 11\\
            \hline\hline
          $p(n)$ & 5 & 7 & 11 & 13 & 17 & 19 & 23 & 29 & 31 \\
          \hline
          $L_{p(n)}$ & 11 & 29 & 199 & 521 & 3571 & 9349 & 64079 & 1149851 & 3010349\\
          \hline
          $\phi(L_{p(n)})$ & 10 & 28 & 198 & 520 & 3570 & 9348 & 63480 & 1130304 & 3010348 \\
          \hline
        \end{tabular}
        \medskip
        
        \caption{This sequence $L_{p(n)}$, in which $p(n)$ is the $n$th prime and $L_k$ is the $k$th Lucas number;
       see \href{https://oeis.org/A180363}{A180363} in the OEIS.  Although the initial terms in this sequence are prime,
        they are not all so.  Theorem \ref{Theorem:Lucas} ensures that $p(n)$ divides $L_{p(n)}$ for $n \geq 3$.}
        \label{table:bugs}
        \end{center} 
    \end{table}

    \begin{lemma}\label{qlem}
        If $p \geq 5$ is an odd prime, then there is an odd prime $q$ so that $q |L_p$.
    \end{lemma}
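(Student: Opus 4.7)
The plan is to reduce the claim to a one-line parity computation: show that $L_p$ is odd whenever $p\ge 5$ is prime, and then invoke the (entirely trivial) fact that $L_p>1$ to guarantee the existence of a prime divisor, which must then be odd.

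For the parity step, I would reduce the Lucas recurrence $L_n = L_{n-1}+L_{n-2}$ modulo $2$. With initial conditions $L_0\equiv 0$ and $L_1\equiv 1 \pmod 2$, the sequence $L_n \bmod 2$ is
\begin{equation*}
0,\ 1,\ 1,\ 0,\ 1,\ 1,\ 0,\ 1,\ 1,\ \ldots,
\end{equation*}
which is periodic with period $3$. Hence $2 \mid L_n$ if and only if $3 \mid n$. Since $p\ge 5$ is prime, $p\ne 3$, so $3\nmid p$, and therefore $L_p$ is odd.

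To conclude, I would observe that $L_p \ge L_5 = 11 > 1$ for every prime $p\ge 5$ (the Lucas numbers are strictly increasing from index $1$ onward), so $L_p$ has at least one prime divisor $q$. Because $L_p$ is odd, any such $q$ is odd, which proves the lemma. There is really no obstacle here; the only content is the observation that the period of the Lucas sequence modulo $2$ is coprime to every prime $p\ge 5$, which forces $L_p$ to be odd.
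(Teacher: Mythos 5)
Your proof is correct, and it takes a slightly different (and in fact sharper) route than the paper's. You reduce the recurrence modulo $2$, observe that $L_n \bmod 2$ has period $3$ (namely $0,1,1$), and conclude that $L_p$ is odd precisely because a prime $p\geq 5$ is not divisible by $3$; since $L_p>1$, any prime factor is then an odd prime. The paper instead reduces modulo $8$, checks that the resulting period-$12$ sequence never hits $0$, and concludes that $L_n$ is never divisible by $8$; combined with $L_n>8$ for $n\geq 5$, this shows $L_n$ is not a power of two and so must have an odd prime factor. Your argument buys a stronger intermediate fact ($L_p$ is genuinely odd, not merely ``not a power of two'') with a smaller modulus, but it genuinely uses the hypothesis $3\nmid p$; the paper's modulo-$8$ argument is indifferent to primality and applies to every index $n\geq 5$, at the cost of a longer period to verify. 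One tiny bookkeeping point: with the paper's convention $L_0=2$, your claimed residue sequence $0,1,1,0,1,1,\ldots$ is exactly right, and the monotonicity claim $L_p\geq L_5=11$ is immediate from the recurrence, so there is no gap.
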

    
    \begin{proof}
        If we observe $L_0,L_1,\ldots,L_{11}$ modulo $8$, we get $2,1,3,4,7,3,2,5,7,4,3,7,2,1$.
        Because the first two digits of this sequence are the same as the last and $L_n = L_{n-1} + L_{n-2}$, the sequence repeats. 
        Thus, $L_n$ is never divisible by $8$, and furthermore $L_n > 8$ for all $n \geq 5$. 
        Any integer greater than $8$ and not divisible by $8$ cannot be a power of two. Thus, there exists an odd prime $q$ such that $q|L_p$. 
    \end{proof}

    \begin{theorem}\label{Theorem:Lucas}
        If $p\geq 5$ is an odd prime, then $p |  \phi(L_p)$.
    \end{theorem}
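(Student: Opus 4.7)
The plan is to find an odd prime $q$ dividing $L_p$ and show that $p \mid q-1$; since $q-1$ divides $\phi(L_p)$, this gives the conclusion. The key identity will be the Pell-type relation $L_n^2 - 5F_n^2 = 4(-1)^n$, which forces $5$ to be a quadratic residue modulo any odd prime divisor of $L_p$; then a short order computation inside $\mathbb{F}_q^\times$ finishes the job.

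First I would apply Lemma \ref{qlem} to obtain an odd prime $q$ dividing $L_p$. Specializing $L_n^2 - 5F_n^2 = 4(-1)^n$ to the odd integer $n = p$ yields $5F_p^2 = L_p^2 + 4$, so reducing modulo $q$ gives $5 F_p^2 \equiv 4 \pmod{q}$. Note $q \nmid F_p$, for otherwise $q \mid 4$, contradicting that $q$ is an odd prime. Hence $5 \equiv (2 F_p^{-1})^2 \pmod{q}$, so $5$ is a quadratic residue modulo $q$, and the roots $\alpha = (1+\sqrt{5})/2$ and $\beta = (1-\sqrt{5})/2$ of $x^2 - x - 1$ lie in $\mathbb{F}_q$.

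These roots satisfy $\alpha + \beta = 1$, $\alpha\beta = -1$, and $\alpha - \beta \neq 0$ in $\mathbb{F}_q$: indeed, $q \neq 5$ since the period of $(L_n \bmod 5)$ is $2,1,3,4$, so $L_p$ is never divisible by $5$. Moreover $\beta \neq 0$ in $\mathbb{F}_q$ (else $\sqrt{5} \equiv 1$, forcing $q = 2$). The congruence $L_p = \alpha^p + \beta^p \equiv 0 \pmod{q}$ then rearranges to $(\alpha\beta^{-1})^p = -1$ in $\mathbb{F}_q^\times$, so the element $\gamma := \alpha \beta^{-1}$ has order dividing $2p$ but not $p$; the only possibilities are $2$ and $2p$. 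If the order were $2$, then $\gamma = \pm 1$, which would force $\alpha = \beta$ (ruled out by $\alpha - \beta \neq 0$) or $\alpha = -\beta$ (ruled out by $\alpha + \beta = 1$). Thus the order of $\gamma$ is exactly $2p$, and Lagrange's theorem gives $2p \mid q - 1$, so in particular $p \mid q - 1$.

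To finish, write $L_p = q^k m$ with $\gcd(q,m) = 1$ and $k \geq 1$; multiplicativity of $\phi$ gives $\phi(L_p) = q^{k-1}(q-1)\phi(m)$, so $q - 1$ divides $\phi(L_p)$ and therefore $p \mid \phi(L_p)$. The main obstacle is the order calculation, specifically the step of ruling out that $\gamma$ has order $2$; once the residuosity of $5$ modulo $q$ is established, everything else is routine.
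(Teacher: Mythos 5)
Your proof is correct, but it takes a genuinely different route from the paper's. The paper works entirely on the Fibonacci side: from $F_{2p}=L_pF_p$ it deduces $q\mid F_{2p}$, then uses the rank of apparition $z(q)$ together with $\gcd(F_a,F_b)=F_{\gcd(a,b)}$ to pin down $\gcd(2p,z(q))$, rules out the cases $1,2,p$ (the last via $L_p^2-5F_p^2=4(-1)^p$), and finally invokes the classical divisibility $z(q)\mid q-\leg{q}{5}$ plus quadratic reciprocity to convert $(\frac{5}{q})=1$ into $(\frac{q}{5})=1$ and conclude $2p\mid z(q)\mid q-1$. You instead use the same identity $L_p^2-5F_p^2=-4$ only to certify that $5$ is a square modulo $q$, and then run a direct order computation in $\mathbb{F}_q^\times$: with $\alpha,\beta$ the roots of $x^2-x-1$ in $\mathbb{F}_q$, the congruence $\alpha^p+\beta^p\equiv 0$ forces $\alpha\beta^{-1}$ to have order exactly $2p$, whence $2p\mid q-1$ by Lagrange. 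Your route is more self-contained — it avoids the gcd identity for Fibonacci numbers, the apparition-index machinery, and quadratic reciprocity altogether, at the mild cost of setting up Binet's formula over $\mathbb{F}_q$ (which is legitimate here since $q$ is odd, $q\neq 5$, and $5$ is a quadratic residue). The only cosmetic wrinkle is that ruling out $\gamma=1$ via $\alpha\neq\beta$ is redundant: order exactly $2$ already means $\gamma=-1$, which your observation $\alpha+\beta=1$ kills. Both arguments close the same way, via $q-1=\phi(q)\mid\phi(L_p)$.
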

    
    \begin{proof}
        Let $F_n$ denote the $n$th Fibonacci number and let $z(n)$ denote the order of appearance of $n$ \cite[p.~89]{vajda1989fibonacci}. 
        By Lemma \ref{qlem} there is an odd prime $q$ such that $q| L_p$. 
        Using the fact that $F_{2p} = L_pF_p$ \cite[p.~25]{vajda1989fibonacci}, we know that $q| F_{2p}$. 
        Furthermore, $q| F_{z(q)}$ by \cite[p.~89]{vajda1989fibonacci}.  Consequently, 
        $$q| \gcd(F_{2p},F_{z(q)}) = F_{\gcd(2p,z(q))},$$ 
        where we have used that $\gcd(F_a,F_b) = F_{\gcd(a,b)}$ for all $a,b \in \Z^+$ \cite[Theorem 16.3]{koshy2011fibonacci}. 
        Now, set $d = \gcd(2p,z(q))$, and observe that since $p$ is prime, $d = 1$, $2$, $p$ or $2p$. 
        However, $q$ is an odd prime and $q| F_d$. If $d = 1$ or $2$, this implies $q| 1$ because $F_1 = F_2 = 1$, 
        which is impossible because $q$ is an odd prime. Thus, $d = p$ or $2p$. 
        Now, consider the case $d = p$, implying that $q| F_n$. However,  by \cite[p.~29]{vajda1989fibonacci} we know 
        $$L_p^2 - 5F_p^2 = 4(-1)^p,$$ thus implying that $q| 4$ which is impossible because $q$ is an odd prime. 
        Thus, $d = \gcd(2p,z(q)) = 2p$ and therefore $2p| z(q)$. Furthermore, we know $z(q)| q - (\frac{q}{5})$.
        Now, $q$ cannot be 5, because the Lucas numbers are always coprime to 5 \cite[p.~89]{vajda1989fibonacci}. 
        We would like to show that $(\frac{q}{5}) = 1$, because then $$p \big| 2p \big| z(q) \big| (q-1) = \phi(q)\big| \phi(L_p).$$
         
        Thus we must show that $q$ is a quadratic residue modulo $5$. For this, we must again use the fact that $L_p^2 - 5F_p^2 = 4(-1)^p$. 
        Reducing this modulo $q$, we get that $$-5F_p^2 \equiv -4 \pmod{q}.$$ Thus $(\frac{5}{q}) = 1$, 
        and furthermore $(\frac{q}{5})=1$ by quadratic reciprocity. 
    \end{proof}

    For $n \geq 3$, Theorem \ref{Theorem:Lucas} guarantees the existence of an order $p(n)$ subgroup $\Lambda$ of 
    $(\Z/L(p(n))\Z)^\times$.  In principle, this permits the patterns hinted at in 
    Figure \ref{fig:fleeting} continue indefinitely.  However, 
    many questions remain. How can the spider phenomena be formalized? 
    One can immediately recognize a spider when one sees it, but it is more difficult to express the irregularity in a mathematical manner. 
    Further, how does the structure of Lucas numbers for prime indices influence the spider-like images? 
    These are questions we hope to return to at a later time.

\label{Bibliography}

\bibliographystyle{amsplain} 

\bibliography{GKS}

\end{document}